\setlist[enumerate,1]{label=(\roman*)}
\numberwithin{equation}{section}
\newtheorem{theorem}{Theorem}[section]
\newtheorem{lemma}[theorem]{Lemma}
\newtheorem{proposition}[theorem]{Proposition}
\newtheorem{corollary}[theorem]{Corollary}
\newtheorem{question}[theorem]{Question}
\theoremstyle{definition}
\newtheorem{remark}[theorem]{Remark}
\crefname{enumi}{}{}
\crefname{equation}{}{}
\Crefname{equation}{Formula}{Formulas}
\crefname{question}{Question}{Questions}
\Crefname{question}{Question}{Questions}
\newcommand\f\frac
\newcommand\tx\text
\newcommand\intd{\mathop{}\!\mathrm{d}}
\newcommand\seq{:=}
\newcommand\lm[1]{\mathcal L(#1)}
\DeclareMathOperator{\var}{var}
\DeclareMathOperator{\sign}{sign}
\DeclareMathOperator{\esssup}{ess\,sup}
\DeclareMathOperator{\essinf}{ess\,inf}
\newcommand\ind[1]{1_{#1}}
\newcommand\M{\mathrm M}
\newcommand\Mc{\mathrm M^{\text c}}
\newcommand\ord{\mathcal O}
\newcommand\loc{\mathrm{loc}}
\title{Sobolev bounds and counterexamples for the second derivative of the maximal function in one dimension}
\author{Julian Weigt\footnote{
University of Warwick,
Mathematics Institute,
Coventry CV47AL,
United Kingdom,
\texttt{julian.weigt@warwick.ac.uk}
}
}
\begin{document}

\maketitle

\begin{abstract}
We investigate the question whether the \(L^1(\mathbb{R})\)-norm of the second derivative of the uncentered Hardy-Littlewood maximal function can be bounded by a constant times the \(L^1(\mathbb{R})\)-norm of the function itself.
We give a positive answer for a class of functions that contains Sobolev functions on the real line which are decreasing away from the origin and even,
and we provide a counterexample which is also decreasing away from the origin but not even.
\end{abstract}

\begingroup
\begin{NoHyper}%
\renewcommand\thefootnote{}\footnotetext{%
2020 \textit{Mathematics Subject Classification.} 42B25, 26A45.\\%
\textit{Key words and phrases.} maximal function, second derivative.\\%
This work was supported by the European Union's Horizon 2020 research and innovation programme (Grant agreement No. 948021).%
}%
\addtocounter{footnote}{-1}%
\end{NoHyper}%
\endgroup

\section{Introduction}

\subsection{Background}

\subsubsection{The original questions}
We are studying the second derivative of the uncentered Hardy-Littlewood maximal function which for a function \(f:\mathbb{R}\rightarrow\mathbb{R}\) is given by
\[
\M f(x)
=
\sup_{a<x<b}
\f1{b-a}
\int_a^b
f(y)
\intd y
.
\]
The maximal function of a function on \(\mathbb{R}^d\) is defined using averages over balls instead of intervals.
Usually the maximal function is defined in terms of averages over the absolute value \(|f(y)|\) instead of \(f(y)\), but for the purposes of studying its regularity the formulation above without the absolute value turned out to be more appropriate.

The Hardy-Littlewood maximal function theorem states that for \(p>1\) the bound
\begin{equation}
\label{eq_hardylittlewood}
\|\M f\|_{L^p(\mathbb{R}^d)}
\leq C_{p,d}
\|f\|_{L^p(\mathbb{R}^d)}
\end{equation}
holds, and for \(p=1\) we have the weak version
\begin{equation}
\label{eq_hardylittlewoodweak}
\|\M f\|_{L^{1,\infty}(\mathbb{R}^d)}
\leq C_{1,d}
\|f\|_{L^1(\mathbb{R}^d)}
.
\end{equation}
In 1997 Juha Kinnunen proved in \cite{Kin97} the derivative version
\begin{equation}
\label{eq_kinnunen}
\|\nabla\M f\|_{L^p(\mathbb{R}^d)}
\leq C_{p,d}
\|\nabla f\|_{L^p(\mathbb{R}^d)}
\end{equation}
if \(p>1\).
In 2002 Tanaka showed in \cite{MR1898539} that for \(d=1\) also
\begin{equation}
\label{eq_endpoint}
\|\nabla\M f\|_{L^1(\mathbb{R}^d)}
\leq C_{1,d}
\|\nabla f\|_{L^1(\mathbb{R}^d)}
\end{equation}
holds, and in 2004 Haj\l{}asz and Onninen formally asked in \cite{MR2041705} the question if this is the case also in higher dimensions and for other maximal operators such as the centered Hardy-Littlewood maximal operator \(\Mc\).

\subsubsection{The progress so far}
Much research has since then been dedicated to proving \cref{eq_endpoint} in higher dimensions,
successfully only for specific maximal operators and in specific cases.
In particular its original formulations for the centered and uncentered Hardy-Littlewood maximal function remain yet to be answered.
Moreover, there has been an increased interest in related operators such as fractional maximal operators and convolution operators,
and in related questions such as the operator continuity of \(f\mapsto\nabla\M f\).
See \cite{carneiro2019regularity} for a survey of the field and \cite{gonzálezriquelme2021continuity,weigt2024variation,zbMATH07780902} for more recent results among many many others.
While \cref{eq_hardylittlewood,eq_kinnunen} are equivalent to prove for a range of maximal operators, \cref{eq_endpoint} and its related questions turned out to be generally more difficult for centered than for uncentered maximal operators.

For the purpose of this work we now restrict our attention to the classical centered and uncentered Hardy-Littlewood maximal operators in one dimension, where \cref{eq_endpoint} has already been proven.
For the uncentered Hardy-Littlewood maximal operator it is even known due to Aldaz and P\'erez L\'azaro, \cite{MR2276629}, that the best constant \(C_{1,1}\) in \cref{eq_endpoint} is \(1\).
\Cref{eq_endpoint} was proven for the centered Hardy-Littlewood maximal operator in \cite{MR3310075} with significantly more effort, but the corresponding best constant \(C_{1,1}\) remains unknown.
It is conjectured to be \(1\) as well, which has been confirmed in \cite{bilz2021onedimensional} at least within the class of characteristic functions.
The discrete setting, i.e.\ when considering functions \(f:\mathbb{N}\rightarrow\mathbb{R}\), has in most instances so far been a mirror of the continuous setting \cite{BCHP12,CH12,Mad17,bilz2021onedimensional}.

\subsubsection{Extremizers}
\label{subsubsec_extremizers}
It appears that extremizers and extremizing sequences of most of the previously discussed bounds can be found among radially decreasing functions.
For example for the uncentered Hardy-Littlewood maximal function in one dimension this is known to be true in two instances.
In \cite{zbMATH00936588} Grafakos and Montgomery-Smith show that for every \(p>1\) truncations of \(x\mapsto |x|^{-1/p}\) maximize the constant in \cref{eq_hardylittlewood} in this case,
and it follows from \cite{MR2276629}  that all functions that monotonously decrease to zero from the origin are extremizers of \cref{eq_endpoint} and of its relaxed version
\begin{equation}
\label{eq_variationbound}
\var(\M f)
\leq C
\var(f)
.
\end{equation}
For the centered Hardy-Littlewood maximal operator in one dimension the characteristic function of an interval is the only extremizer of \cref{eq_variationbound} among all characteristic functions \cite{bilz2021onedimensional}, and if the best constant also for general functions is indeed 1 then this is also a general extremizer.

The only known exception to this pattern is \cref{eq_hardylittlewoodweak} for the centered maximal function in one dimension.
In \cite{zbMATH02057955} Melas constructed a very involved extremizing sequence which in particular beats the largest constant attainable by radially decreasing functions.
This is indicated by the fact that already \(\|\Mc(\ind{(-2,-1)\cup(1,2)})\|_{L^{1,\infty}(\mathbb{R})}>\|\Mc(\ind{(-1,1)})\|_{L^{1,\infty}(\mathbb{R})}\).
To the best of my knowledge for no other of the bounds discussed so far or no other maximal function exists a proof that the best constant is not achieved by radially decreasing functions.
However also this setting of \(L^p\)-bounds for \(\Mc\) remains characterized by radially decreasing functions to the extent that for any \(1\leq p\leq\infty\) the bound \cref{eq_hardylittlewood} holds if and only if it holds for radially decreasing functions.

\subsection{Definition of the variation}
\label{subsec_definitions}

We roughly follow \cite[Section~5]{MR3409135}.
For an open set \(U\subset\mathbb{R}\) the variation of a locally integrable function \(f\in L^1_\loc(U)\) is defined as
\[
	\var_U(f)
	=
	\sup\biggl\{
		-
		\int_U f\varphi'
		:
		\varphi\in C^1_{\tx c}(U)
		,\ 
		|\varphi|\leq1
	\biggr\}
	.
\]
Denote by \(U_f\) the set of points of approximate continuity of \(f\) in \(U\).
If \(U\) is an open interval then for any \(1\leq q<\infty\) define the \(q\)-variation by
\[
	\var_U^q(f)
	=
	\sup\biggl\{
		\sum_{i=1}^m
		\bigl|
		f(x_{i+1})
		-
		f(x_i)
		\bigr|^q
		:
		m\in\mathbb{N}
		,\ 
		x_i\in U_f
		,\ 
		x_1<\ldots<x_m
	\biggr\}^{\f1q}
\]
and
\[
	\var_U^\infty(f)
	=
	\sup\biggl\{
		\bigl|
		f(x)
		-
		f(y)
		\bigr|
		:
		x,y\in U_f
	\biggr\}
	.
\]
We have \(\var_U^1(f)=\var_U(f)\), and if it is finite then there exists a Radon measure denoted by \(f'\) such that for all \(\varphi\in C^1_{\tx c}(U)\) we have
\[
	\int_U\varphi\intd(f')
	=
	-\int_Uf\varphi'
	,
\]
and moreover for the total variation measure \(|f'|\) we have
\[
	|f'|(U)
	=
	\var_U(f)
	.
\]

The function \(f\) is Sobolev if and only if \(f'\) is absolutely continuous with respect to Lebesgue measure, and it this case it equals its weak derivative.
That means \cref{eq_variationbound} makes sense for all functions \(f\) that have a vector valued Radon measure as its gradient and are not necessarily Sobolev in contrast to \cref{eq_endpoint}.
Examples for such functions are characteristic functions of sets with finite perimeter.
If \(f\) is Sobolev then it has a continuous representative \(\tilde f\) which means that if \(U\) is an open interval we have
\[
	\var_U^q(f)
	=
	\sup\biggl\{
		\sum_{i=1}^m
		\bigl|
		\tilde f(x_{i+1})
		-
		\tilde f(x_i)
		\bigr|^q
		:
		m\in\mathbb{N}
		,\ 
		x_i\in U
		,\ 
		x_1<\ldots<x_m
	\biggr\}^{\f1q}
	.
\]
Many proofs of regularity of maximal functions in one dimension rely on this formula, in particular in this manuscript.

\subsection{Second derivatives}

\subsubsection{The uncentered maximal operator}
Several aspects drastically differ for second derivative bounds of maximal functions when compared to the previously discussed zeroth and first derivative bounds.
Given a smooth, even and radially decreasing bump function \(g\) supported on the interval \((-1,1)\) its maximal function is strictly increasing on \((-\infty,0)\) and decreasing on \((0,\infty)\).
Moreover, for \(f(x)=g(x+2)+g(x-2)\) we have for \(-1\leq x\leq 0\) that \(\M f(x)=\M g(x+2)\) and for \(0\leq x\leq 1\) we have \(\M f(x)=\M g(x-2)\).
This means that in \(x=0\) the derivative of \(\M f\) jumps from \(\M g'(2)<0\) to \(\M g'(-2)=-\M g'(2)>0\).
This means \(\M f\) is not twice weakly differentiable and hence \(\M f''\not\in L^p(\mathbb{R})\) for any \(p\geq 1\).
The function \(f\) on the other hand is smooth and compactly supported and thus \(f''\in L^p(\mathbb{R})\), which means that
\begin{equation}
	\label{eq_ddmfboundp}
	\|(\M f)''\|_p
	\leq C_p
	\|f''\|_p
\end{equation}
can \emph{not} hold in any sense if \(p>1\).

Like $\M f$, also $|f|$ can have kinks even if $f$ is a smooth function on $\mathbb{R}$.
However, in the endpoint $p=1$ it is straightforward to see that \(\var(|f|')\leq2\var(f')\) holds for $f\in C^1_0(\mathbb{R})$.
This begs the question if also \((\M f)'\) has finite variation, i.e.\ if \((\M f)''\) still exist as a Radon measure.
One may argue though that the similarity between $\M$ and $f\mapsto|f|$ is limited here because we define $\M$ without the absolute value.

In any case, the previous double bump example is unlikely to contradict \cref{eq_ddmfboundp} for \(p=1\) in its relaxed form
\begin{equation}
\label{eq_ddmf1}
	\var((\M f)')
	\leq C
	\var(f')
	.
\end{equation}
Indeed, in \cite{temur2022second} Faruk Temur has proven \cref{eq_ddmf1} for characteristic functions in the discrete setting.
Our first main result is that in the continuous setting counterexamples against \cref{eq_ddmf1} exist and they even violate the corresponding weak bound.

\begin{theorem}
\label{theo_counterexample}
For every \(1\leq q<\infty\) and \(C>0\) there exists a weakly differentiable function \(f:\mathbb{R}\rightarrow\mathbb{R}\) with only one local maximum such that
\[
	\sup_{\lambda>0}
	\lambda
	\lm{\{x\in\mathbb{R}:|(\M f)''(x)|>\lambda\}}
	> C
	\var(f')
\]
and
\[
	\var^q((\M f)')
	> C
	\var^q(f')
.
\]
\end{theorem}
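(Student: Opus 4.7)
Since the article's positive results cover even unimodal functions, any counterexample must genuinely exploit asymmetry. The plan is to construct a one-parameter family \(f_n:\R\to\R\) of asymmetric weakly differentiable unimodal functions with
\(
\var^q((\M f_n)')/\var^q(f_n')\to\infty
\)
and
\(
\sup_{\lambda>0}\lambda\lm{\{|(\M f_n)''|>\lambda\}}/\var(f_n')\to\infty
\)
as \(n\to\infty\); for any prescribed \(C\) and \(q\), taking \(n\) large enough then produces a single function proving the theorem.

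\textbf{Construction.} Fix a small \(\epsilon>0\). Let \(f_n\) attain its unique maximum \(f_n(0)=1\), drop linearly to \(0\) on the short interval \([0,\epsilon]\) (creating a steep right flank), and on \([-1,0]\) be a piecewise-linear increasing profile from \(0\) to \(1\) with \(n\) small ``shoulders'' located at the dyadic points \(-2^{-k}\), \(k=1,\dots,n\). The slope changes \(d_k\) of \(f_n\) across the shoulders are chosen geometrically so that the sums \(\sum_k d_k\) and \(\sum_k d_k^q\) remain bounded uniformly in \(n\); this keeps \(\var(f_n')\) and \(\var^q(f_n')\) of order \(1\) independently of \(n\).

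\textbf{Analysis of \(\M f_n\).} Writing \(F_n(t)=\int_{-\infty}^t f_n\), the uncentered maximal function is \(\M f_n(x)=\sup_{a<x<b}(F_n(b)-F_n(a))/(b-a)\), the maximal chord slope over intervals straddling \(x\). Because \(f_n\) is small and monotonically decreasing just to the right of the peak, for \(x>0\) in the relevant range the optimal right endpoint \(b\) collapses to \(x\), reducing the problem to
\[
\M f_n(x)=\sup_{a<x}\f{F_n(x)-F_n(a)}{x-a},
\]
i.e.\ the slope of the upper tangent from \((x,F_n(x))\) to an earlier point of the graph of \(F_n\). The first-order condition identifies the optimal \(a^*(x)\) by \(f_n(a^*(x))=\M f_n(x)\) and gives the identity
\[
(\M f_n)'(x)=\f{f_n(x)-\M f_n(x)}{x-a^*(x)},
\]
so features of \((\M f_n)'\) and singularities of \((\M f_n)''\) are driven by the rate of change of \(a^*(x)\). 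As \(x\) varies to the right of the peak, each shoulder at \(-2^{-k}\) becomes the active touching point \(a^*(x)\) over a range of \(x\)-values of size comparable to \(2^{-k}\); at the transitions between consecutive scales, \(a^*(x)\) is forced to move rapidly, producing localised kinks in \((\M f_n)'\) and corresponding large-mass pieces of \((\M f_n)''\) whose size can be calibrated, via the shoulder parameters, to be essentially independent of \(d_k\).

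\textbf{Conclusion and main obstacle.} A scale-by-scale accounting then gives \(\var^q((\M f_n)')\gtrsim n^{1/q}\) and \(\sup_\lambda\lambda\lm{\{|(\M f_n)''|>\lambda\}}\gtrsim n\), while \(\var^q(f_n')=O(1)\) and \(\var(f_n')=O(1)\), yielding both conclusions. The main technical obstacle is twofold: first, quantifying precisely how a small shoulder of \(f_n\) on the left transfers into an order-one kink of \((\M f_n)'\) on the right, which amounts to amplification of local features by the asymmetry between the short steep right flank (width \(\epsilon\)) and the long left profile (width \(1\)); and second, ensuring that the contributions from the \(n\) different scales do not destructively interfere. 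Both points are addressed by the geometric spacing of the shoulders, which arranges for each scale \(k\) to dominate \(\M f_n\) on an \(x\)-range disjoint from the others', so that the \(n\) contributions to \(\var^q((\M f_n)')\) and to the weak-\(L^1\) seminorm of \((\M f_n)''\) add rather than cancel.
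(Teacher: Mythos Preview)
Your proposal identifies the right qualitative picture---an asymmetric unimodal function with oscillations on the shallow side---but the quantitative mechanism is misidentified, and the construction as described will not work. You assert that with geometrically decaying slope-changes $d_k$ (so that $\var(f_n')$ and $\var^q(f_n')$ stay bounded) the resulting kinks in $(\M f_n)'$ can nevertheless be ``calibrated to be essentially independent of $d_k$'', giving $\var^q((\M f_n)')\gtrsim n^{1/q}$. But the amplification coming from asymmetry is by a \emph{fixed finite} factor: for the hat function with left slope $A$ and right slope $-B$ one has $(\M f)'(x)\approx -\sqrt{AB}$ for $x>0$, so perturbing the left slope by $d$ perturbs $(\M f)'$ by $\sim d\sqrt{B/A}$. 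In your scaling ($A\sim1$, $B\sim1/\epsilon$) this factor is $\sim1/\sqrt\epsilon$, bounded once $\epsilon$ is fixed. If the $d_k$ decay geometrically, so do the kinks, and $\var^q((\M f_n)')$ stays bounded as well; geometric spacing of the shoulders prevents interference but cannot manufacture order-one kinks from vanishing slope-changes.

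The paper's construction handles this differently: it does \emph{not} keep $\var^q(f')$ bounded. On the left the slope alternates $N$ times between $\varepsilon$ and essentially $0$, so every slope-jump has the \emph{same} size $\sim\varepsilon$, while the right slope is $-1$. This gives $\var^q(f')^q\sim 1+N\varepsilon^q$ (the $1$ is the unavoidable large jump of $f'$ at the origin) and $N$ kinks in $(\M f)'$ each of size $\sim\sqrt\varepsilon$, hence $\var^q((\M f)')^q\gtrsim N\varepsilon^{q/2}$. The blow-up of the ratio is obtained by sending $\varepsilon\to0$ (so the amplification $1/\sqrt\varepsilon\to\infty$) while simultaneously taking $N\varepsilon^q\to\infty$ so that the $N$ small jumps dominate the single jump of size $1$ in the denominator. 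The role of the many oscillations is to swamp that one large jump, not to accumulate unbounded variation from bounded input. A secondary issue: the paper places the oscillations at scales $-M^n\to-\infty$ and lets $f\to-\infty$ at both ends, so that the optimal endpoint $a(x)$ sweeps through all of them as $x$ ranges over $(0,\infty)$; with your compactly supported profile on $[-1,\epsilon]$ the relevant $x$-range is only of length $\sim\sqrt\epsilon$, and one must check there is room for $a(x)$ to visit all $n$ shoulders before the formula for $(\M f_n)'$ changes regime.
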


In the endpoint \(q=\infty\) the bound
\begin{equation}
\label{eq_infinityvariation}
	\var^\infty((\M f)')
	\leq
	\var^\infty(f')
\end{equation}
holds for the following reason.
Recall Luiro's formula, originally \cite[Theorem~3.1]{zbMATH05120227}, according to which in almost every \(x\) we have
\[
	(\M f)'(x)
	=
	\f1{b_x-a_x}
	\int_{a_x}^{b_x}
	f'
\]
with \(a_x\leq x\leq b_x\) so that
\[
	\M f(x)
	=
	\f1{b_x-a_x}
	\int_{a_x}^{b_x}
	f
	,
\]
with appropriate modifications if \(a_x=b_x\) or \(b_x-a_x=\infty\).
We can conclude that for every \(x\in\mathbb{R}_{(\M f)'}\) there are \(x_0,x_1\in\mathbb{R}_{f'}\) with \(f'(x_0)\leq(\M f)'(x)\leq f'(x_1)\) and thus
\begin{align*}
	\var^\infty((\M f)')
	&=
	\esssup((\M f)')-\essinf((\M f)')
	\\
	&\leq
	\esssup(f')-\essinf(f')
	=
	\var^\infty(f')
	.
\end{align*}
It is unknown if there exists a \(C<1\) such that \(\var^\infty((\M f'))\leq C\var^\infty(f')\), similarly to how it is unknown if \cref{eq_kinnunen} holds for \(p=\infty\) with some \(C_{\infty,d}<1\).

The proof of \cref{theo_counterexample} exploits a different phenomenon than the double bump counterexample \(p>1\) from above.
The starting point is that for \(A,B>0\) with \(A\) much smaller than \(B\) the function
\begin{equation}
\label{eq_hatfunction}
f(x)
=
\begin{cases}
Ax&x\leq0\\
-Bx&x\geq0
\end{cases}
\end{equation}
satisfies \((\M f)'(x)\approx-\sqrt{AB}\).
This means disturbing \(f'\) by \(\varepsilon A\) for \(x\leq0\) leads to a disturbance of \((\M f)'\) by \(\sim\varepsilon\sqrt{AB}\) for \(x\geq0\).
Note, that \(A\ll B\) implies \(\varepsilon\sqrt{AB}\gg\varepsilon A\).
Than means a small variation of \(f'\) can lead to a much larger variation of \((\M f)'\), contradicting a variation bound for the first derivative.
We formally prove \cref{theo_counterexample} in \cref{sec_counterexample}.

By discretization \cref{theo_counterexample} can be turned into a counterexample in the discrete setting, which means that the result by \cite{temur2022second} cannot be generalized from characteristic functions to all functions.
However, if we require the functions to be in addition even, which makes them radially decreasing, then \cref{eq_ddmf1} holds.

\begin{theorem}
\label{theorem_radial}
There exists a \(C>0\) such that \cref{eq_ddmf1} holds for all \(f:\mathbb{R}\rightarrow\mathbb{R}\) which are even and nonincreasing on \((0,\infty)\).
\end{theorem}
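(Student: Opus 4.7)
Write $u = \M f$. Since $\M$ preserves the class of even functions nonincreasing on $(0,\infty)$, $u$ is even and nonincreasing on $(0,\infty)$, and in particular $u'$ is odd. Decomposing $\var_{\mathbb{R}}(u') = 2\var_{(0,\infty)}(u') + |u'(0^-) - u'(0^+)|$ and bounding the jump at the origin by $2\|u'\|_\infty \leq 2\|f'\|_\infty \leq C\var(f')$ via Luiro's formula (using $f'(\pm\infty) = 0$, so $\|f'\|_\infty$ is controlled by $\var(f')$), the problem reduces to bounding $\var_{(0,\infty)}(u')$ by a constant multiple of $\var(f')$.

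For $x > 0$ I would next pin down the structure of the optimal interval $[a_x, b_x]$ in the definition of $\M f(x)$. Stationarity in $b$ combined with $f$ nonincreasing on $(0,\infty)$ forces $b_x = x$; stationarity in $a$ together with evenness of $f$ then gives $a_x = -\alpha_x$ for a unique $\alpha_x \in (0, x)$ satisfying the KKT condition
\[
	f(\alpha_x)\,(x + \alpha_x) = F(\alpha_x) + F(x), \qquad F(y) := \int_0^y f.
\]
The map $x \mapsto \alpha_x$ is continuous and strictly increasing. This yields $u(x) = f(\alpha_x)$ and $u'(x) = (f(x) - f(\alpha_x))/(x+\alpha_x)$ for $x > 0$. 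Combined with the chain-rule identity $u'(x) = f'(\alpha_x)\alpha'_x$, differentiating $u'(x)(x+\alpha_x) = f(x) - f(\alpha_x)$ and eliminating $\alpha'_x$ produces
\[
	u''(x)\,(x+\alpha_x) = f'(x) - 2u'(x) - \frac{u'(x)^2}{f'(\alpha_x)}.
\]

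The plan is then to bound $\int_0^\infty |u''(x)|\,dx$ term by term, using the change of variables $\beta = \alpha_x$ (monotone) together with the identity
\[
	\int_{\alpha_x}^{x} (x - t)\,|f'(t)|\,dt = 2\int_0^{\alpha_x} t\,|f'(t)|\,dt,
\]
which follows from the KKT condition by integration by parts. The first two terms in the numerator of $u''$ are readily dominated by $\var(f')$ this way. The hard part is the quadratic term $u'(x)^2/|f'(\alpha_x)|$: it naively blows up where $f'(\alpha_x)$ is small, but the identity above should imply that $u'(x)^2$ is correspondingly small in such regions, yielding a uniform bound.

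If this direct route proves unwieldy, a softer fallback is to establish that $u'$ is unimodal on $(0,\infty)$. Outside the support of $f$ (for compactly supported $f$), the formula $u'(x) = -u(x)/(x+\alpha_x)$ combined with $\alpha'_x \geq 0$ gives $u'' \geq 0$ by direct differentiation; inside the support, a sign analysis of the expression above for $u''$ together with the concavity of $f$ near the origin should give $u'' \leq 0$, with the general case recovered by approximation. Unimodality then yields $\var_{(0,\infty)}(u') = 2|\inf u'| \leq 2\|f'\|_\infty \leq C\var(f')$ and hence \cref{eq_ddmf1} with an explicit constant. The main obstacle will be either controlling the quadratic term in the direct route, or verifying unimodality for general $f$ in the fallback route, where specific cases such as $f(x) = (1-x^2)_+$ are encouraging but a uniform statement requires a careful sign analysis.
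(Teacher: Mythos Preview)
Your preliminaries are sound---the reduction to $(0,\infty)$, the endpoint $b_x=x$, the formula for $u''$, and the key fact $\alpha_x\le x$ (which is exactly the hypothesis $c=1$ of \cref{theorem_general}, established in the paper by a short convexity argument). But both continuation routes have real gaps. The direct route of bounding $\int_0^\infty|u''|$ term by term cannot succeed because the three summands are not individually integrable against $(x+\alpha_x)^{-1}$: for the tent $f(x)=(1-|x|)_+$ one computes $\alpha_x=(\sqrt2-1)x$, so $\int_0^1|f'(x)|/(x+\alpha_x)\,dx=\infty$ while $\var(f')<\infty$; here the three terms cancel exactly and $u''\equiv0$ on $(0,1)$, so cancellation is essential and a term-by-term estimate is doomed. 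Your fallback, unimodality of $u'$ on $(0,\infty)$, is false in general: the heuristic you give invokes concavity of $f$ near the origin, which is not assumed, and for even radially nonincreasing $f$ whose $-f'$ oscillates between large and small positive values, $(\M f)''$ changes sign many times.

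The missing idea is a bound on $\alpha'_x=u'(x)/f'(\alpha_x)$, which is precisely the quantity governing your ``hard'' quadratic term. The paper extracts this from $\alpha_x\le cx$ via an exponential-decay argument: the differential inequality for $-u'$ coming from the $u''$ formula forces $-u'$ to decay no faster than a fixed power of $x$, so on every dyadic block there is a point where $\alpha'$ is bounded by a constant depending only on $c$. Rather than integrating $|u''|$, the paper then compares $u'$ at consecutive zeros of $u''$: at such zeros the algebraic relation $f'(x)=2u'(x)+u'(x)^2/f'(\alpha_x)$ together with the bound on $\alpha'$ yields $|u'(x_1)-u'(x_0)|\lesssim|f'(x_1)-f'(x_0)|+|f'(\alpha_{x_1})-f'(\alpha_{x_0})|$, and a dyadic decomposition of the zero set sums these increments to $\var(f')$.
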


This is a bit surprising in view of \cref{subsubsec_extremizers}.
The zeroth and first derivative bounds \cref{eq_hardylittlewood,eq_hardylittlewoodweak,eq_kinnunen,eq_variationbound,eq_endpoint} hold or are conjectured to hold if and only if they hold for radially decreasing functions.
For the uncentered maximal function radially decreasing functions are even known or conjectured to be their extremizers, and even for the centered maximal function this may hold in some cases.
\Cref{theo_counterexample,theorem_radial} show that the second derivative bound \cref{eq_ddmf1} does not obey this pattern, already for the uncentered maximal function.

The same may be the case for \cref{eq_ddmfboundp}.
The counterexample for \(p>1\) uses that for a smooth function \(g\) its maximal function \(\M g\) can have corners, but it is unclear if this can occur also if \(g\) is even and decreasing on \((0,\infty)\), or more generally if it has only one local maximum like in \cref{theo_counterexample}.
Conversely, the functions \(f_1,f_2,\ldots\) constructed in \cref{theo_counterexample} all have corners, which means that their second derivative does not have a finite \(L^p(\mathbb{R})\)-norm for \(p>1\).
This suggests the following \lcnamecref{que_radiallydecreasingp}.

\begin{question}
\label{que_radiallydecreasingp}
Does \cref{eq_ddmfboundp} hold for some \(p>1\) for even functions \(f:\mathbb{R}\rightarrow\mathbb{R}\) which are nonincreasing on \((0,\infty)\)?
Does it hold for functions \(f\) which have at most one local maximum?
\end{question}

Our proof of \cref{theorem_radial} relies on the following generalization.

\begin{theorem}
\label{theorem_general}
For every \(c>0\) there exists a \(C>0\) such that for every \(f:\mathbb{R}\rightarrow\mathbb{R}\) which is nonincreasing on \((0,\infty)\), nondecreasing on \((-\infty,0)\) and which for every \(x>0\) satisfies
\begin{equation}
\label{eq_intervalsrestricted}
\M f(x)
=
\sup_{y\geq -cx}
\f1{x-y}
\int_y^x f
\end{equation}
we have \cref{eq_ddmf1}, i.e.
\[
\var((\M f)')
\leq C
\var(f')
.
\]
\end{theorem}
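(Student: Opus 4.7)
The plan is to decompose $\R$ using the contact set $E = \{x : \M f(x) = f(x)\}$ and bound the variation of $(\M f)'$ on $E$ and on each component of $\R \setminus E$ separately. After a standard approximation I may assume $f \in C^\infty(\R)$ with $f \to 0$ at $\pm\infty$, so $f$ attains its global maximum at $0$. On $E$, Luiro's formula gives $(\M f)' = f'$ almost everywhere, contributing at most $\var(f')|_E$ to the total variation. On each component $(\alpha, \beta) \subseteq \R \setminus E$ the monotonicity of $f$ determines the optimal averaging interval $(a_x, b_x)$: for $x \in (\alpha, \beta) \cap (0, \infty)$, since $f$ is nonincreasing on $(x, \infty)$, I may take $b_x = x$, and the hypothesis \eqref{eq_intervalsrestricted} supplies an optimal $a_x \in [-cx, x)$ which, when interior, satisfies the first-order condition $f(a_x) = \M f(x)$. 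Consequently $(\M f)'(x) = (f(x) - f(a_x))/(x - a_x)$ is the average of $f'$ over an interval of length at most $(1 + c)x$ that contains $x$. On $(\alpha, \beta) \cap (-\infty, 0)$ an analogous structure holds with $a_x = x$ and $b_x$ interior determined by $f(b_x) = \M f(x)$.

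The heart of the argument is to bound, on each sub-component, the variation of the chord-slope $x \mapsto (f(x) - f(a_x))/(x - a_x)$. I would further subdivide based on the sign of $a_x$ and exploit that $a_x$ is monotone in $x$ on each sub-region: when $a_x < 0$, the identity $f(a_x) = \M f(x)$ together with $\M f$ nonincreasing on $(0, \infty)$ and $f$ nondecreasing on $(-\infty, 0)$ forces $a_x$ to be nonincreasing in $x$; when $a_x > 0$ the analogous argument forces $a_x$ to be nondecreasing. Using this monotonicity, the variation of $(\M f)'$ on each sub-region telescopes into differences of chord slopes, which I rewrite as integrals of $f'$ over a neighborhood of length $O(x)$ and bound by a corresponding piece of $\var(f')$ with bounded overlap. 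The factor $1 + c$ enters the final constant at this step.

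For components intersecting $(-\infty, 0)$ with $b_x > 0$ the hypothesis \eqref{eq_intervalsrestricted} is not directly available, so I would apply it at the point $y = b_x > 0$: the interval $[x, b_x]$ is admissible for $y$ and yields $\M f(y) \geq \M f(x)$, while the optimal interval for $y$ has left endpoint $\geq -cb_x$, and a first-order comparison of these two intervals forces $b_x$ to be comparable to $|x|$ up to a constant depending only on $c$. The jump of $(\M f)'$ at $x = 0$ is bounded by $\esssup f' - \essinf f' \leq \var(f')$. Summing contributions from $E$, from each non-contact sub-region, and from the jump at $0$ yields $\var((\M f)') \leq C(c)\, \var(f')$. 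The main obstacle will be the bookkeeping of overlaps in the sub-region decomposition, so that the sum of chord-slope variations is controlled by a single copy of $\var(f')$ up to a $c$-dependent constant, together with a careful proof of the bound $b_x \leq c'|x|$ from the one-sided hypothesis.
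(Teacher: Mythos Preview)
Your contact-set decomposition is vacuous here: since $f$ is strictly monotone away from $0$, the set $E=\{x:\M f(x)=f(x)\}$ reduces to $\{0\}$, so the entire argument must take place on $(0,\infty)$ (and symmetrically on $(-\infty,0)$). The substantive claim is therefore the paragraph you label ``the heart of the argument'', and this is where the gap lies. You assert that the variation of the chord slopes $x\mapsto (f(x)-f(a_x))/(x-a_x)=\text{avg}_{[a_x,x]}f'$ can be bounded by pieces of $\var(f')$ with bounded overlap. But for $0<x_0<x_1$ one has $a_{x_1}\le a_{x_0}<0<x_0<x_1$, so the averaging intervals $[a_{x_i},x_i]$ are \emph{nested}, not finitely overlapping; and a short computation shows that the difference of two such averages is controlled only by the oscillation of $f'$ on the \emph{full} larger interval, not by $\var(f')$ restricted to the symmetric difference. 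Summing over a fine partition then produces $N\cdot\var_{[a_{x_N},x_N]}(f')$, which is useless. Nothing you have written explains how the constraint $|a_x|\le c|x|$ prevents this blow-up; merely having intervals of length $O(x)$ does not give bounded overlap once they all contain the origin.

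The mechanism the paper uses is genuinely different. At a zero $x$ of $(\M f)''$, the identity
\[
0=(\M f)''(x)=\frac{f'(x)-(2-a'(x))(\M f)'(x)}{x-a(x)},\qquad a'(x)=\frac{(\M f)'(x)}{f'(a(x))}
\]
forces $(\M f)'(x)$ to be an \emph{algebraic} function of the two numbers $f'(x)$ and $f'(a(x))$ alone (not of the whole average). A stability lemma for this algebraic relation then gives $|(\M f)'(x_1)-(\M f)'(x_0)|\lesssim_D |f'(x_1)-f'(x_0)|+|f'(a(x_1))-f'(a(x_0))|$ provided $-a'(x_i)\le D$; and this \emph{does} sum to $\var(f')+\var(f'\circ a)\le 2\var(f')$ because $a$ is monotone. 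The hypothesis $|a(x)|\le c|x|$ enters not as a length bound but, via a pigeonhole/Gr\"onwall argument, to guarantee enough points where $-a'$ is bounded by a constant depending only on $c$ (the paper's Remark after the stability lemma shows this control on $-a'$ is essential). The global bound is then assembled by a dyadic decomposition of $(0,\infty)$ at zeros of $(\M f)''$. Your outline contains none of these ingredients, and without them I do not see how the chord-slope step can be completed.
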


\begin{proof}[Proof of \cref{theorem_radial}]
Let \(x>0\) and \(y<-x\).
Then for any \(y<z<-x<u<x\) we have \(f(z)\leq f(u)\) which means
\[
\f1{-y-x}
\int_y^{-x} f
\leq
\f1{2x}
\int_{-x}^x f
,
\]
and thus
\begin{align*}
\f1{x-y}
\int_y^x f
&=
\f{-y-x}{x-y}
\f1{-y-x}
\int_y^{-x} f
+
\f{2x}{x-y}
\f1{2x}
\int_{-x}^x f
\\
&\leq
\max\Bigl\{
\f1{-y-x}
\int_y^{-x} f
,
\f1{2x}
\int_{-x}^x f
\Bigr\}
\\
&=
\f1{2x}
\int_{-x}^x f
.
\end{align*}
Thus \cref{eq_intervalsrestricted} holds with \(c=1\) and we can apply \cref{theorem_general}.
\end{proof}

We prove \cref{theorem_general} in \cref{section_proof} for functions with a lower and upper Lipschitz bound, and then extend the result to full generality in \cref{sec_approximation} by approximation.

\subsubsection{The centered maximal operator \texorpdfstring{\(\Mc\)}{Mc}}

To disprove \cref{eq_ddmfboundp} for \(p>1\) a similar argument as in the uncentered case applies for the centered Hardy-Littlewood maximal function \(\Mc f\).
Also the proof of \cref{eq_infinityvariation} works for the centered maximal function \(\Mc f\) the same way as it does for the uncentered.

The validity of \cref{theo_counterexample,theorem_radial,theorem_general} remains open for the centered maximal operator \(\Mc\).
Since the function \cref{eq_hatfunction} is concave, its centered maximal function \(\Mc f\) is equal to \(f\) which means that this particular strategy to obtain a counterexample for \cref{theo_counterexample} does not work.
Similarly, for a radially decreasing function its centered maximal function does not satisfy a lower bound on the radii it averages over similar to \cref{eq_intervalsrestricted} in \cref{theorem_general}, which means that our proof of \cref{theorem_radial} fails for the centered maximal function.
That means even if a proof of \cref{theorem_general} is possible also for the centered maximal function, i.e.\ for functions satisfying
\[
\Mc f(x)
=
\sup_{r>(1+c)|x|}
\f1{2r}
\int_{x-r}^{x+r}
f
\]
instead of \cref{eq_intervalsrestricted}, it is not clear how interesting this class of functions is.

\begin{question}
Do \cref{eq_ddmf1,theo_counterexample,theorem_general,theorem_radial,que_radiallydecreasingp} hold for the centered Hardy-Littlewood maximal function?
\end{question}

\section{Proof assuming a lower Lipschitz bound}
\label{section_proof}

In this \lcnamecref{section_proof} we prove \cref{theorem_smooth}, which is \cref{theorem_general} for functions with certain regularity.
For the rest of this \lcnamecref{section_proof} let \(K>0\) and let \(f:\mathbb{R}\rightarrow\mathbb{R}\) such that for all \(x<y\) with \(0\not\in(x,y)\) we have
\begin{equation}
\label{eq_diffboundassumption}
\f1K
\leq
-\sign(x+y)
\f{f(y)-f(x)}{y-x}
\leq
K
.
\end{equation}
We assume that \cref{eq_diffboundassumption} holds throughout this \lcnamecref{section_proof}.
For any \(y\neq x\) abbreviate
\[
A_f(y,x)
=
\f1{x-y}
\int_y^x
f
,
\]
so that
\[
	\M f(x)
	=
	\sup_{y\neq x}
	A_f(y,x)
	.
\]
By \cref{eq_diffboundassumption} the function \(f\) is continuous, strictly increasing on \((-\infty,0)\), strictly decreasing on \((0,\infty)\) and moreover \(\lim_{|x|\rightarrow\infty}f(x)=-\infty\).
We can infer that \(\M f(0)=f(0)\), that \(\M f\) is continuous in \(0\),
and that for every \(\pm x>0\) there exists is a unique value \(\pm a(x)<0\) for which
\[
\M f(x)
=
A_f(a(x),x)
.
\]
Since \(f\) is continuous, the map \(A_f\) is continuously differentiable in \((x,a(x))\) and thus 
\[
0
=
\partial_1 A_f(a(x),x)
=
\f1{(x-a(x))^2}
\int_a^x f
-
\f{f(a(x))}{x-a(x)}
=
\f{\M f(x)-f(a(x))}{x-a(x)}
.
\]
Since the restriction \(f_\pm\seq f|_{\{y:\pm y>0\}}\) is invertible we can conclude
\begin{equation}
\label{eq_a}
a(x)
=
(f_\mp)^{-1}(\M f(x))
.
\end{equation}

\begin{lemma}
\label{lemma_lipschitz}
The maximal function \(\M f\) is \(K\)-Lipschitz and \(a\) is \(K^2\)-Lipschitz.
\end{lemma}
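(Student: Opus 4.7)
The plan is to bound $(\M f)'$ pointwise using the stationarity $\partial_1 A_f(a(x), x) = 0$ already derived in the text, and then to transfer the resulting Lipschitz bound to $a$ via the inverse relation \cref{eq_a}. First I would observe that \cref{eq_diffboundassumption} makes $f$ a $K$-Lipschitz function on each of $(-\infty,0]$ and $[0,\infty)$, and continuity at $0$ upgrades this to a global $K$-Lipschitz bound on $\R$.

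For $\M f$, fix $x>0$. Because $a = (f_-)^{-1}\circ\M f$ is continuous (as the composition of continuous functions) and $A_f$ is $C^1$ on the open region $\{(y,z):y<z\}$, the envelope identity combined with the stationarity condition gives
\[
(\M f)'(x)
=
\partial_2 A_f(a(x),x)
=
\f{f(x)-A_f(a(x),x)}{x-a(x)}
=
\f{f(x)-f(a(x))}{x-a(x)},
\]
where the last step uses $\M f(x) = f(a(x))$, which is the content of $\partial_1 A_f(a(x), x) = 0$ computed in the text. The global $K$-Lipschitz bound on $f$ then yields $|(\M f)'(x)|\leq K$ for a.e.\ $x>0$; the symmetric argument handles $x<0$, and continuity of $\M f$ at $0$ patches the two sides together, so $\M f$ is globally $K$-Lipschitz.

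For $a$, on $(0,\infty)$ relation \cref{eq_a} reads $a = (f_-)^{-1}\circ\M f$. Since $f_-$ has slope at least $1/K$, its inverse is $K$-Lipschitz, and composing with the $K$-Lipschitz function $\M f$ gives that $a$ is $K^2$-Lipschitz on $(0,\infty)$. The analogous computation produces the same bound on $(-\infty,0)$. Finally, $\M f(x)\to f(0)$ as $x\to 0$ forces $a(x)\to 0$, so continuity across $0$ promotes the two one-sided bounds to a global $K^2$-Lipschitz bound.

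The only mildly delicate point is justifying the envelope identity for $(\M f)'$ without a priori differentiability of $a$. I would handle this by the standard two-sided sandwich: for small $h>0$, admissibility of $a(x)$ at the point $x+h$ gives $\M f(x+h)-\M f(x)\leq A_f(a(x),x+h)-A_f(a(x),x)$, while admissibility of $a(x+h)$ at $x$ gives the reverse inequality with $a(x+h)$ in place of $a(x)$. Continuity of $a$ combined with $C^1$ dependence of $A_f$ on its second slot squeezes both difference quotients to $\partial_2 A_f(a(x),x)$ as $h\to 0$, yielding the derivative formula without any appeal to $a'$.
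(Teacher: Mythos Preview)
Your approach is sound and genuinely different from the paper's. The paper never computes $(\M f)'$; instead, for $0<x<y$ it tests the competitor interval $[a(x),y]$ at the point $y$, uses the crude lower bound $f(t)\geq f(a(x))-K(y-a(x))$ for $t\in(x,y)$, and reads off $\M f(y)\geq\M f(x)-K(y-x)$ directly. You go the other way: first establish the pointwise formula $(\M f)'(x)=(f(x)-f(a(x)))/(x-a(x))$ by an envelope sandwich, then invoke the global $K$-Lipschitz bound on $f$. Both routes finish with the same composition argument for $a$ via \cref{eq_a}. Your route yields \cref{eq_diffmf} already here, whereas the paper reproves it later in \cref{lem_mandadifferentiable} \emph{using} the Lipschitz bound on $a$ you are establishing; the paper's route, in turn, is slightly more self-contained in that it does not need continuity of $\M f$ (hence of $a$) on $(0,\infty)$ as an input to the argument.

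One slip to correct: both sandwich inequalities are written with the wrong sign. Admissibility of $a(x)$ at $x+h$ means $\M f(x+h)\geq A_f(a(x),x+h)$, and since $\M f(x)=A_f(a(x),x)$ this gives
\[
\M f(x+h)-\M f(x)\ \geq\ A_f(a(x),x+h)-A_f(a(x),x);
\]
likewise admissibility of $a(x+h)$ at $x$ yields the \emph{upper} bound with $a(x+h)$ in the first slot. With the directions swapped back, the squeeze to $\partial_2 A_f(a(x),x)$ works exactly as you describe.
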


\begin{proof}
By continuity of \(\M f\) in \(0\) and by symmetry it suffices to consider \(0<x<y\).
Then \(\M f(y)\leq\M f(x)\) and \(a(y)\leq a(x)\), and moreover for every \(x<t<y\) by \cref{eq_diffboundassumption} we have
\[
	f(t)
	\geq
	f(y)
	\geq
	f(0)
	-
	Ky
	\geq
	f(a(x))
	-
	K(y-a(x))
.
\]
Inserting \(f(a(x))=\M f(x)\) we obtain
\begin{align*}
\M f(y)
&\geq
\f1{y-a(x)}
\int_{a(x)}^y
f
\\
&\geq
\f1{y-a(x)}
\Bigl(
\int_{a(x)}^x
f
+
(y-x)
[
\M f(x)
-
K(y-a(x))
]
\Bigr)
\\
&=
\M f(x)
-
K(y-x)
,
\end{align*}
which means that \(\M f\) is \(K\)-Lipschitz.
By \cref{eq_diffboundassumption} the restrictions \(f_\pm\) are \(1/K\)-lower Lipschitz and we consequently obtain from \cref{eq_a} that \(a\) is a composition of two \(K\)-Lipschitz functions and thus itself \(K^2\)-Lipschitz.
\end{proof}

\begin{lemma}
\label{lem_mandadifferentiable}
For any \(n=0,1,\ldots\) if \(f\) is \(n\) times differentiable in \(x\neq0\) and in \(a(x)\) then \(\M f\) is \(n+1\) times differentiable in \(x\) and \(a\) is \(n\) times differentiable in \(x\).
In particular,
\begin{align}
\label{eq_diffmf}
	(\M f)'(x)
	&=
	\f{f(x)-\M f(x)}{x-a(x)}
	,\\
	\label{eq_da}
	a'(x)
	&=
	\f{(\M f)'(x)}{f'(a(x))}
	,\\
	\label{eq_ddmf}
	(\M f)''(x)
	&=
	\f
	{
		f'(x)
		-
		(2-a'(x))(\M f)'(x)
	}{x-a(x)}
\end{align}
\end{lemma}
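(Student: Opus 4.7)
The plan is to argue by induction on $n$, relying on two recursive identities already sitting on the page. The first is \cref{eq_a}, $a(x)=(f_\mp)^{-1}(\M f(x))$, which expresses $a$ in terms of $\M f$ and $f$; the second is the first-derivative formula \cref{eq_diffmf}, which expresses $(\M f)'$ in terms of $f$, $\M f$ and $a$. Each application of the first identity trades regularity of $f$ and $\M f$ for regularity of $a$; each application of the second trades regularity of $f$, $\M f$ and $a$ for one additional derivative of $\M f$.

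For the base case $n=0$ I would establish \cref{eq_diffmf} together with continuity of $a$. Continuity of $a$ is immediate from \cref{eq_a}, the continuity of $\M f$ from \cref{lemma_lipschitz}, and the continuity of $(f_\mp)^{-1}$, itself a consequence of the strict monotonicity of $f$ on each half-line guaranteed by \cref{eq_diffboundassumption}. For differentiability of $\M f$ at $x$, an envelope-type squeeze suffices. Taking WLOG $x>0$, the extremality of $a(x)$ at $x$ and of $a(y)$ at $y$ gives
\[
A_f(a(x),y)-\M f(x)\leq \M f(y)-\M f(x)\leq \M f(y)-A_f(a(y),x)
\]
for $y$ near $x$. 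Dividing by $y-x$, both outer quotients are discrete derivatives of the map $t\mapsto A_f(z,t)$ with $z$ equal to $a(x)$ respectively $a(y)$. Since $A_f(z,t)$ is continuously differentiable in $t$ with $\partial_t A_f(z,t)=(f(t)-A_f(z,t))/(t-z)$, and since $a$ is continuous at $x$ and $f$ is continuous on $\R$, a mean-value argument makes both outer quotients converge to $(f(x)-\M f(x))/(x-a(x))$ as $y\to x$. This establishes \cref{eq_diffmf}.

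For the inductive step, suppose the statement holds at $n-1$ and let $f$ be $n$ times differentiable at $x$ and at $a(x)$. By the inductive hypothesis, $\M f$ is $n$ times and $a$ is $n-1$ times differentiable at $x$. Because $|f'(a(x))|\geq 1/K$ by \cref{eq_diffboundassumption}, the restriction $f_\mp$ is a $C^n$-diffeomorphism near $a(x)$, so $(f_\mp)^{-1}$ is $n$ times differentiable at $f(a(x))=\M f(x)$; composing with $\M f$ via \cref{eq_a} upgrades $a$ to being $n$ times differentiable at $x$. Feeding $f$, $\M f$ and $a$ — now all $n$ times differentiable at $x$ — into the right-hand side of \cref{eq_diffmf}, and using $x-a(x)\neq 0$, shows that $(\M f)'$ is $n$ times differentiable at $x$, hence $\M f$ is $n+1$ times differentiable at $x$. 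Formula \cref{eq_da} then falls out by chain-rule differentiation of the critical-point identity $f(a(x))=\M f(x)$ noted before \cref{eq_a}, and \cref{eq_ddmf} by applying the quotient rule to \cref{eq_diffmf} and substituting $f(x)-\M f(x)=(x-a(x))(\M f)'(x)$ to simplify. The main subtlety is the base case: justifying differentiability of $\M f$ at an individual point using only pointwise information about $f$ forces the explicit envelope squeeze rather than a textbook envelope theorem, which would require more global smoothness of the family $z\mapsto A_f(z,\cdot)$.
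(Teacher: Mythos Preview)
Your proof is correct and follows essentially the same route as the paper: establish \cref{eq_diffmf} first, then bootstrap via \cref{eq_a} and \cref{eq_diffmf} in an induction on $n$. The only cosmetic difference is in the base case, where you prove $(\M f)'(x)=\partial_2 A_f(a(x),x)$ by an envelope-type squeeze, while the paper uses the criticality $\partial_1 A_f(a(x),x)=0$ together with the $K^2$-Lipschitz bound on $a$ from \cref{lemma_lipschitz} to kill the first-variable increment directly; these are two standard phrasings of the same envelope argument.
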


\begin{proof}
Since \(\partial_1 A_f(a(x),x)=0\) and since \(a(x)\) is \(K^2\)-Lipschitz by \cref{lemma_lipschitz} we have for \(y\rightarrow x\) that
\[
\f{|
A_f(a(x),x)-A_f(a(y),x)
|}{|
x-y
|}
=
\underbrace{
\f{|
A_f(a(x),x)-A_f((a(y),x)
|}{|
a(x)-a(y)
|}
}_{\rightarrow0}
\underbrace{
\f{|
a(x)-a(y)
|}{|
x-y
|}
}_{\leq K^2}
\to0
\]
and therefore
\begin{align*}
	(\M f)'(x)
&=
\partial_2 A_f(a(x),x)
=
\f\intd{\intd x}
\Bigl(
\f1{x-y}
\int_y^x
f
\Bigr)
\Bigr|_{y=a(x)}
\\
&=
-
\f1{(x-a(x))^2}
\int_{a(x)}^x
f
+
\f{f(x)}{x-a(x)}
\end{align*}
which equals \cref{eq_diffmf}.
If \(f\) is differentiable in \(x\) and \(a(x)\) then \cref{eq_da} follows as a consequence of \cref{eq_a}, and from \cref{eq_diffmf} we obtain
\[
	(\M f)''(x)
	=
	\f{f'(x)-(\M f)'(x)}{x-a(x)}
	-
	\f{1-a'(x)}{x-a(x)}(\M f)'(x)
\]
which equals \cref{eq_ddmf}.
It follows by induction that for every \(n\in\mathbb{N}\) the \((n+1)\)th derivative \((\M f)^{(n+1)}(x)\) is a smooth function of
\(
f^{(i)}(x)
,
(\M f)^{(i)}(x)
\)
and
\(
a^{(i)}(x)
\)
for \(i=0,\ldots,n\),
of
\(f^{(i)}(a(x))\)
for \(i=1,\ldots,n\) and of \(x\).
Moreover, it follows that \(a^{(n)}(x)\) is a smooth function of
\(
f^{(i)}(a(x))
\)
and
\(
(\M f)^{(i)}(x)
\)
for \(i=0,\ldots,n\)
and of
\(a^{(i)}(x)\)
for \(i=1,\ldots,n-1\).
\end{proof}

From now on and for the rest of this \lcnamecref{section_proof} we assume in addition to \cref{eq_diffboundassumption} that \(f\) is twice differentiable on \(\mathbb{R}\setminus\{0\}\).
That means \cref{eq_diffmf,eq_da,eq_ddmf} hold and \((\M f)''\) is continuous on \(\mathbb{R}\setminus\{0\}\).
Moreover, we let \(c>0\) and assume that for any \(x>0\) we have
\begin{equation}
\label{eq_abound}
-a(x)\leq cx
.
\end{equation}

\begin{lemma}
\label{lem_boundbetweenzeronum}
Let \(D>0\) and for \(i=0,1\) let \(u_i,v_i,w_i>0\) be real numbers such that
\[
u_iv_i
=
w_i^2
+
2w_iv_i
\]
and
\(w_i\leq D v_i\).
Then
\[
|w_1-w_0|
\leq2(2+D)^2
\bigl(
|u_1-u_0|
+
|v_1-v_0|
\bigr)
.
\]
\end{lemma}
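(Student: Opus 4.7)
The plan is to solve the quadratic $w_i^2 + 2 w_i v_i = u_i v_i$ for $w_i$ as an explicit function of $(u_i,v_i)$. Completing the square gives $(w_i + v_i)^2 = v_i(v_i + u_i)$, and since every quantity is positive this yields
\[
w_i = \sqrt{v_i(v_i+u_i)} - v_i.
\]
In particular,
\[
w_1 - w_0 = \bigl[\sqrt{v_1(v_1+u_1)} - \sqrt{v_0(v_0+u_0)}\bigr] - (v_1 - v_0),
\]
so the real task is to control the square-root difference in terms of $|u_1-u_0|$ and $|v_1-v_0|$.

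Next I would rationalise, observing that $\sqrt{v_i(v_i+u_i)} = w_i + v_i$ feeds the denominator, while the numerator can be expanded using the algebraic identity $v_1(v_1+u_1) - v_0(v_0+u_0) = v_1(u_1-u_0) + (u_0 + v_0 + v_1)(v_1-v_0)$. This gives
\[
\sqrt{v_1(v_1+u_1)} - \sqrt{v_0(v_0+u_0)} = \frac{v_1(u_1-u_0) + (u_0+v_0+v_1)(v_1-v_0)}{(w_1+v_1) + (w_0+v_0)}.
\]

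The final step is to bound the two resulting ratios using $w_i \leq D v_i$. Dividing the defining equation by $v_i$ shows that $u_i = w_i^2/v_i + 2 w_i \leq (D+2)\, w_i$, and in particular $u_0 \leq (D+2) w_0$. This, together with $v_0,v_1 \leq w_0+v_0+w_1+v_1$, yields $u_0+v_0+v_1 \leq (D+2)(w_0+v_0+w_1+v_1)$, so that fraction is bounded by $(D+2)|v_1-v_0|$. Similarly $v_1 \leq (w_0+v_0)+(w_1+v_1)$, so the first fraction is bounded by $|u_1-u_0|$. Adding the extra $|v_1-v_0|$ left over from the first display gives $|w_1 - w_0| \leq |u_1-u_0| + (D+3)|v_1-v_0|$, which is comfortably within the claimed bound since $D+3 \leq 2(D+2)^2$ for $D>0$.

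The argument is entirely algebraic and I do not anticipate a serious obstacle; the only genuine subtlety is the inequality $u \leq (D+2)w$, which is what lets us absorb the ostensibly large factor $u_0$ in the numerator into the sum of $(w+v)$ terms in the denominator.
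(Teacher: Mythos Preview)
Your argument is correct and in fact tighter than the paper's: you obtain $|w_1-w_0|\leq |u_1-u_0|+(D+3)|v_1-v_0|$, well inside the stated $2(2+D)^2$ bound. The core ingredients are the same as in the paper---the explicit formula coming from $(w_i+v_i)^2=v_i(v_i+u_i)$, rationalisation of the square-root difference, and the crucial observation $u_i\leq(D+2)w_i$---so the two proofs are close cousins.

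The execution differs in one useful way. The paper writes $w_i=v_i(\sqrt{1+u_i/v_i}-1)$, which introduces a product of $v_i$ with a square root; handling that product forces a WLOG assumption $v_0\leq v_1$ and a case split on whether $|v_1-v_0|$ is small relative to $|w_1-w_0|$, with the remaining case treated by rationalising and bounding $u_0/v_0$. Your choice to rationalise $\sqrt{v_1(v_1+u_1)}-\sqrt{v_0(v_0+u_0)}$ directly, using $\sqrt{v_i(v_i+u_i)}=w_i+v_i$ in the denominator, sidesteps the case distinction entirely and keeps every estimate symmetric in the indices. This is a genuine simplification, and the bound $u_0\leq(D+2)w_0\leq(D+2)\bigl((w_0+v_0)+(w_1+v_1)\bigr)$ is exactly what makes the numerator--denominator ratio collapse cleanly.
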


\begin{proof}
By symmetry it suffices to consider the case \(v_0\leq v_1\).
First, note that
\begin{align}
\label{eq_ddmfnum}
&&
u_i
&=
(2+w_i/v_i)w_i
\\
\nonumber
\iff&&
u_iv_i
&=
w_i^2
+
2w_iv_i
\\
\nonumber
\iff&&
v_i(u_i+v_i)
&=
(w_i+v_i)^2
\\
\label{eq_ddmfeqzeronum}
\iff&&
w_i
&=
v_i\bigl(\sqrt{1+u_i/v_i}-1\bigr)
.
\end{align}
By \cref{eq_ddmfeqzeronum} we have
\begin{align}
\nonumber
|w_1-w_0|
&\leq
|v_0-v_1|
+
\Bigl|
v_1\sqrt{1+u_1/v_1}
-
v_0\sqrt{1+u_0/v_0}
\Bigr|
\\
\label{eq_mdiffsplit}
&\leq
|v_1-v_0|
\Bigl(
1
+
\max_{i=0,1}
\sqrt{1+u_i/v_i}
\Bigr)
+
v_1
\Bigl|
\sqrt{1+u_1/v_1}
-
\sqrt{1+u_0/v_0}
\Bigr|
\end{align}
and
\[
\sqrt{1+u_i/v_i}
=
w_i/v_i
+
1
\leq
1
+
D
\]
and by \cref{eq_ddmfnum} we have
\begin{equation}
\label{eq_fleqg}
u_i
\leq
(2+D)
w_i
\leq
(2+D)D
v_i
.
\end{equation}
It suffices to consider the case
\[
|v_1-v_0|
\leq
\f{
|w_1-w_0|
}{
2(2+D)
}
\]
which implies
\begin{equation}
\label{eq_gtermsmall}
|v_1-v_0|
\Bigl(
1
+
\max_{i=0,1}
\sqrt{1+u_i/v_i}
\Bigr)
\leq
\f{
|w_1-w_0|
}2
.
\end{equation}
Using \cref{eq_mdiffsplit,eq_fleqg,eq_gtermsmall} we can conclude
\begin{align*}
\f{
|w_1-w_0|
}2
&\leq
v_1
\Bigl|
\sqrt{1+u_1/v_1}
-
\sqrt{1+u_0/v_0}
\Bigr|
\\
&\leq
v_1
\Bigl|
\sqrt{1+u_1/v_1}
-
\sqrt{1+u_0/v_0}
\Bigr|
\Bigl(
\sqrt{1+u_0/v_0}
+
\sqrt{1+u_1/v_1}
\Bigr)
\\
&=
\Bigl|
u_1
-
\f{
u_0v_1
}{
v_0
}
\Bigr|
\\
&=
\Bigl|
u_1-u_0
+
\f{u_0(v_0-v_1)}{v_0}
\Bigr|
\\
&\leq
|u_1-u_0|
+
(2+D)D
|v_1-v_0|
,
\end{align*}
finishing the proof.
\end{proof}

\begin{remark}
\Cref{lem_boundbetweenzeronum} fails without the condition \(w_i\leq Dv_i\) by the following example.
Let \(D\) be large, \(u_i=D\), \(v_0=1/D\) and \(v_1=2/D\).
Since
\[
\sqrt{1+t}
=
\sqrt t
+
\ord(t^{-1/2})
\]
by \cref{eq_ddmfeqzeronum} we have
\[
w_i
=
v_i
(
D
+
\ord(1)
)
\]
and thus
\[
|w_1-w_0|
=
1
+
\ord(D^{-1})
\sim
D
(
|u_1-u_0|
+
|v_1-v_0|
)
.
\]
This shows that the rate in \(D\) in \cref{lem_boundbetweenzeronum} must be at least \(D^1\).
\end{remark}

\begin{proposition}
\label{proposition_lowspeed}
	Let \(0<x_0<x_1<\infty\) be such that for \(i=0,1\) we have \((\M f)''(x_i)=0\) and \(-a'(x_i)\leq D\).
Then
\[
	|(\M f)'(x_0)-(\M f)'(x_1)|
\leq2(2+D)^2
\bigl(
|f'(x_1)-f'(x_0)|
+
|f'(a(x_1))-f'(a(x_0))|
\bigr)
.
\]
\end{proposition}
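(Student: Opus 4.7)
The plan is to reduce the statement directly to \cref{lem_boundbetweenzeronum} by introducing the right positive variables. For \(x>0\) the assumption \cref{eq_diffboundassumption} gives \(f'(x)<0\) and \(f'(a(x))>0\), and combining \cref{eq_diffmf} with \(\M f\geq f\) shows \((\M f)'(x)\leq 0\). I therefore set
\[
u_i=-f'(x_i),\qquad v_i=f'(a(x_i)),\qquad w_i=-(\M f)'(x_i).
\]
All three are strictly positive: for \(u_i\) and \(v_i\) this is the lower Lipschitz bound in \cref{eq_diffboundassumption}, and for \(w_i\) one checks that \(w_i=0\) would force \(f(x_i)=\M f(x_i)=f(a(x_i))\) via \cref{eq_diffmf} and the first order condition, which by strict monotonicity of \(f\) on each half-line contradicts \(\M f(x_i)\) being the average of \(f\) over \([a(x_i),x_i]\).

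The next step is to translate the two hypotheses into the algebraic setup of \cref{lem_boundbetweenzeronum}. Setting \cref{eq_ddmf} to zero at \(x_i\) yields
\[
f'(x_i)=(2-a'(x_i))(\M f)'(x_i),
\]
and substituting \(a'(x_i)=(\M f)'(x_i)/f'(a(x_i))\) from \cref{eq_da}, then multiplying through by \(f'(a(x_i))\) and rewriting in the variables \(u_i,v_i,w_i\), gives precisely
\[
u_iv_i=w_i^2+2w_iv_i.
\]
The hypothesis \(-a'(x_i)\leq D\) becomes \(w_i/v_i\leq D\), i.e.\ \(w_i\leq D v_i\).

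With both hypotheses verified, \cref{lem_boundbetweenzeronum} immediately delivers
\[
|w_1-w_0|\leq 2(2+D)^2\bigl(|u_1-u_0|+|v_1-v_0|\bigr),
\]
which is the claim after unwrapping the definitions of \(u_i,v_i,w_i\). The only conceptual hurdle — the nonlinear coupling between \(a(x)\), \((\M f)'(x)\) and the values \(f'(x),f'(a(x))\) at a zero of \((\M f)''\) — has already been isolated into \cref{lem_boundbetweenzeronum}, so once the correct identification of the three variables is made, the proof reduces to the substitution above. The main place to be careful is the sign bookkeeping and confirming strict positivity of \(w_i\); beyond that, no further estimates are required.
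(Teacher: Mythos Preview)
Your proof is correct and follows exactly the paper's approach: introduce \(u_i=-f'(x_i)\), \(v_i=f'(a(x_i))\), \(w_i=-(\M f)'(x_i)\), verify the algebraic relation \(u_iv_i=w_i^2+2w_iv_i\) from \cref{eq_ddmf,eq_da} at a zero of \((\M f)''\), check \(w_i\leq Dv_i\) from the hypothesis via \cref{eq_da}, and invoke \cref{lem_boundbetweenzeronum}. Your separate argument for \(w_i>0\) is harmless but unnecessary, since \(u_i,v_i>0\) together with \(u_iv_i=w_i(w_i+2v_i)\) already forces \(w_i>0\).
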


\begin{proof}
Let \(w_i=-(\M f)'(x_i)\), \(u_i=-f'(x_i)\) and \(v_i=f'(a(x_i))\).
Then by \(-a'(x_i)\leq D\) and \cref{eq_da} we have \(w_i\leq Dv_i\).
Applying \((\M f)''(x_i)=0\) to \cref{eq_ddmf} and inserting \cref{eq_da} we obtain \(u_iv_i=w_i^2+2w_iv_i\).
That means we can conclude the result from \cref{lem_boundbetweenzeronum}.
\end{proof}

\begin{lemma}
\label{lemma_expdecay}
Let \(D>0\) and \(u_0\leq u_1\).
Let \(b,g:[u_0,u_1]\rightarrow\mathbb{R}\) be weakly differentiable with \(b(u_1)-b(u_0)\leq D(u_1-u_0)\) such that for \(u_0< u< u_1\) we have \(g(u)>0\) and \(g'(u)\geq-b'(u)g(u)\).
Then \(g(u_1)\geq\exp(-D(u_1-u_0))g(u_0)\).
\end{lemma}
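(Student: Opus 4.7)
The plan is a Gronwall-type logarithmic substitution. Define, on the interior \((u_0,u_1)\) where \(g\) is positive,
\[
	h(u) = \log g(u) + b(u).
\]
Since \(g\) is weakly differentiable and strictly positive on \((u_0,u_1)\) (and has a continuous representative), \(\log g\) is locally absolutely continuous on \((u_0,u_1)\) with weak derivative \(g'/g\) by the chain rule for Sobolev functions. Adding the weakly differentiable \(b\), the function \(h\) is locally absolutely continuous on \((u_0,u_1)\) with weak derivative
\[
	h'(u) = \f{g'(u)}{g(u)} + b'(u).
\]
The hypothesis \(g'(u)\geq -b'(u)g(u)\) divided by \(g(u)>0\) is exactly \(h'(u)\geq 0\) almost everywhere, so \(h\) is nondecreasing on \((u_0,u_1)\).

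Next, pass to the boundary. First suppose \(g(u_0)>0\); then one should observe that necessarily \(g(u_1)>0\) as well, since otherwise \(h(u)\to-\infty\) as \(u\to u_1^-\), contradicting the fact that \(h\) is nondecreasing and bounded below near \(u_0\) by continuity. With both endpoint values positive, monotonicity of \(h\) together with continuity of \(g\) and \(b\) on \([u_0,u_1]\) gives
\[
	\log g(u_1) + b(u_1) \geq \log g(u_0) + b(u_0).
\]
Rearranging and applying the hypothesis \(b(u_1)-b(u_0)\leq D(u_1-u_0)\) yields
\[
	\log \f{g(u_1)}{g(u_0)} \geq -\bigl(b(u_1)-b(u_0)\bigr) \geq -D(u_1-u_0),
\]
and exponentiating gives the desired bound. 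The remaining edge case \(g(u_0)=0\) is trivial, since then the inequality reduces to \(g(u_1)\geq 0\), which follows from positivity on the interior and continuity.

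I do not expect a genuine obstacle. The only slight care needed is the justification that \(\log g\) is absolutely continuous with weak derivative \(g'/g\) on compact subintervals of \((u_0,u_1)\), for which the strict positivity assumption on the interior is essential, and the ruling out of \(g(u_1)=0\) by a monotonicity argument before taking the limit.
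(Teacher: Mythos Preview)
Your proof is correct and follows essentially the same Gronwall-type logarithmic substitution as the paper: divide by \(g\), recognize \((\log g)' \geq -b'\), integrate, and use the bound on \(b(u_1)-b(u_0)\). You are in fact more careful than the paper about the endpoint cases \(g(u_0)=0\) and \(g(u_1)=0\), which the paper's terse chain of equivalences silently assumes away.
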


\begin{proof}
We have
\begin{align*}
&&
g'
&\geq
-b'g
\\
&\iff&
(\log\circ g)'
&\geq
-b'
\\
&\implies&
\log(g(u_1))
-
\log(g(u_0))
&\geq
b(u_0)
-
b(u_1)
\geq
-D(u_1-u_0)
\\
&\implies&
\log(g(u_1))
&\geq
\log(\exp(-D(u_1-u_0))g(u_0))
\\
&\iff&
g(u_1)
&\geq
\exp(-D(u_1-u_0))
g(u_0)
.
\end{align*}
\end{proof}

\begin{corollary}
\label{corollary_logexpdecay}
Let \(K,D>0\), \(0<x_0\leq x_1\) and let \(m,h:[x_0,x_1]\rightarrow[0,\infty)\) be nonnegative weakly differentiable functions where \(h\) is nondecreasing with \(h(x_1)-h(x_0)\leq D(x_1-x_0)\) and such that for \(x_0<x<x_1\) we have
\[
m'(x)
\geq
-\f{
K+h'(x)
}{
x+h(x)
}
m(x)
.
\]
Then
\[
m(x_1)
\geq
\Bigl(\f{x_0}{x_1}\Bigr)^{\max\{1+D/K,K+D\}}
m(x_0)
.
\]
\end{corollary}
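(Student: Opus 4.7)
The plan is to reduce to \cref{lemma_expdecay} via the substitution \(u=\log x\), which turns the target power decay in \(x_0/x_1\) into exponential decay in \(u_1-u_0\). Writing \(g(u)=m(e^u)\) and \(u_i=\log x_i\), the hypothesis on \(m\) becomes
\[
g'(u)=x\,m'(x)\geq -\f{x(K+h'(x))}{x+h(x)}\,g(u),
\]
so it suffices to produce a weakly differentiable \(b\) on \([u_0,u_1]\) with \(b(u_1)-b(u_0)\leq E(u_1-u_0)\) for \(E=\max\{1+D/K,\,K+D\}\) and \(b'(u)\geq x(K+h'(x))/(x+h(x))\) at \(x=e^u\); \cref{lemma_expdecay} then delivers \(m(x_1)\geq(x_0/x_1)^E m(x_0)\).

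Set \(c=\min\{K,1\}\) and define \(b(u)=(K-c)u+\log(ce^u+h(e^u))\). Splitting \(K+h'(x)=(K-c)+(c+h'(x))\), the first summand satisfies \(\f{x(K-c)}{x+h(x)}\leq K-c\) (from \(K\geq c\) and \(x\leq x+h(x)\)) and the second satisfies \(\f{x(c+h'(x))}{x+h(x)}\leq\f{x(c+h'(x))}{cx+h(x)}\) (from \(c\leq 1\)), which together yield the required pointwise bound \(b'(u)\geq x(K+h'(x))/(x+h(x))\).

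The main remaining estimate is
\[
\log\f{cx_1+h(x_1)}{cx_0+h(x_0)}\leq\Bigl(1+\f Dc\Bigr)\log(x_1/x_0).
\]
I would substitute \(h(x_1)\leq h(x_0)+D(x_1-x_0)\) and observe that the resulting fraction is nonincreasing in \(h(x_0)\geq 0\), so is maximized at \(h(x_0)=0\); that reduces the claim to \(\f{(c+D)t-D}{c}\leq t^{1+D/c}\) for \(t=x_1/x_0\geq 1\), which follows since both sides equal \(1\) at \(t=1\) and the derivative of the right-hand side dominates on \([1,\infty)\). Combining, \(b(u_1)-b(u_0)\leq(K-c+1+D/c)(u_1-u_0)\), and a two-case check (\(K\geq 1\) gives \(c=1\) and \(K-c+1+D/c=K+D\); \(K\leq 1\) gives \(c=K\) and \(K-c+1+D/c=1+D/K\)) shows this equals \(E(u_1-u_0)\).

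The main subtlety is the choice \(c=\min\{K,1\}\): either extreme \(c=1\) or \(c=K\) alone fails to give the required pointwise bound on \(b'(u)\) in one of the two regimes, and \(c=\min\{K,1\}\) is precisely what balances the linear contribution \((K-c)u\) against \(\log(ce^u+h(e^u))\) so that the exponent comes out to the sharp \(E\) in both cases.
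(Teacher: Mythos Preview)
Your proof is correct and follows the same strategy as the paper: substitute \(u=\log x\), build an auxiliary \(b\) with \(g'\geq -b'g\), and invoke \cref{lemma_expdecay}. The only difference is tactical—the paper uses the single expression \(b(u)=\max\{1,K\}\log(Ke^u+h(e^u))\) and bounds its increment via the pair of inequalities \(1-1/t\leq\log t\leq t-1\), whereas you use \(b(u)=(K-c)u+\log(ce^u+h(e^u))\) with \(c=\min\{K,1\}\) and a derivative comparison; for \(K\leq1\) the two choices of \(b\) in fact coincide, and for \(K>1\) both yield the same exponent \(K+D\).
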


\begin{proof}
Define \(g=m\circ\exp\) and \(b=\max\{1,K\}\log\circ(K\exp+h\circ\exp)\).
Using \(h(x_1)-h(x_0)\leq D(x_1-x_0)\) and \(1-1/t\leq\log t\leq t-1\) we obtain
\begin{align*}
	\f{
	b(\log(x_1))
	-
	b(\log(x_0))
	}{
	\max\{1,K\}
	}
	&=
	\log\Bigl(\f{
			Kx_1+h(x_1)
		}{
			Kx_0+h(x_0)
		}
	\Bigr)
	\\
	&\leq
	\log\Bigl(1+\f{
			(K+D)(x_1-x_0)
		}{
			Kx_0+h(x_0)
		}
	\Bigr)
	\\
	&\leq
	\log\Bigl(1+\f{
			(K+D)(x_1-x_0)
		}{
			Kx_0
		}
	\Bigr)
	\\
	&=
	\log\Bigl(
		\f{x_1}{x_0}
		\Bigl[1+
			\f DK\Bigl(1-\f{x_0}{x_1}\Bigr)
		\Bigr]
	\Bigr)
	\\
	&=
	\log\Bigl(\f{x_1}{x_0}\Bigr)
	+
	\log\Bigl(1+\f DK\Bigl(1-\f{x_0}{x_1}\Bigr)\Bigr)
	\\
	&\leq
	\log\Bigl(\f{x_1}{x_0}\Bigr)
	+
	\f DK\Bigl(1-\f{x_0}{x_1}\Bigr)
	\\
	&\leq
	\log\Bigl(\f{x_1}{x_0}\Bigr)
	+
	\f DK\log\Bigl(\f{x_1}{x_0}\Bigr)
	\\
	&=
	(1+D/K)(\log(x_1)-\log(x_0))
	.
\end{align*}
Moreover
\[
b'
=
\max\{1,K\}
\f{
(K+h'\circ\exp)\exp
}{
K\exp+h\circ\exp
}
\geq
\f{
(K+h'\circ\exp)\exp
}{
\exp+h\circ\exp
}
\]
and thus
\[
g'
=
(m'\circ\exp)\cdot\exp
\geq
-\f{
K+h'\circ\exp
}{
\exp+h\circ\exp
}
(m\circ\exp)
\cdot
\exp
\geq
-b'g
\]
and we can conclude from \cref{lemma_expdecay} that
\begin{align*}
m(x_1)
&=
g(\log(x_1))
\\
&\geq
\exp\bigl(-\max\{1+D/K,K+D\}[\log(x_1)-\log(x_0)]\bigr)
g(\log(x_0))
\\
&=
\Bigl(\f{x_0}{x_1}\Bigr)^{\max\{1+D/K,D+K\}}
m(x_0)
.
\end{align*}
\end{proof}

\begin{corollary}
\label{corollary_mdecayorig}
For any \(x_0\leq x_1\) we have
\[
	-(\M f)'(x_1)
	\geq
	e^{-c}
	\Bigl(
	\f{x_0}{x_1}
	\Bigr)^{2+c}
	(
	-
	(\M f)'(x)
	)
	.
\]
\end{corollary}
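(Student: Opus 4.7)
The plan is to apply \cref{corollary_logexpdecay} to the nonnegative function $m(x) := -(\M f)'(x)$ on $(0,\infty)$, with $h(x) := -a(x)$ and $K = 2$. First I would record that $m(x) > 0$ for $x > 0$, which follows from \cref{eq_diffmf} and $f(x) < f(a(x)) = \M f(x)$ (since $a(x) \neq x$). The function $h$ is nonnegative and nondecreasing on $(0,\infty)$: by \cref{eq_a} and the fact that $\M f$ is nonincreasing on $(0,\infty)$ while $f_{-} := f|_{(-\infty,0)}$ is strictly increasing, $a = (f_{-})^{-1}\circ \M f$ is nonincreasing. Moreover, \cref{eq_abound} yields $h(x) \leq cx$.

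Next I would rewrite \cref{eq_ddmf} using \cref{eq_da} in terms of $m$ and $h$:
\[
m'(x) = \f{-f'(x) - (2 + h'(x))\, m(x)}{x + h(x)}.
\]
Since $f'(x) \leq 0$ for $x > 0$, this yields $m'(x) \geq -\f{2 + h'(x)}{x + h(x)}\, m(x)$, matching the hypothesis of \cref{corollary_logexpdecay} with $K = 2$; if one could legitimately set $D = c$, the target exponent $\max\{1 + c/2,\,2 + c\} = 2 + c$ would drop out immediately.

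The subtlety, and main obstacle, is that \cref{corollary_logexpdecay} asks for the Lipschitz-type bound $h(x_1) - h(x_0) \leq c(x_1 - x_0)$, whereas \cref{eq_abound} only gives the pointwise bound $h(x_1) - h(x_0) \leq cx_1$. I would therefore revisit the integration step inside the proof of \cref{corollary_logexpdecay} under this weaker hypothesis. Using $h \geq 0$ and $h' \geq 0$ gives $\f{2 + h'(x)}{x + h(x)} \leq \f{2}{x} + \f{h'(x)}{x}$, and integration by parts combined with $h(x) \leq cx$ yields
\[
\int_{x_0}^{x_1} \f{h'(x)}{x}\intd x
= \f{h(x_1)}{x_1} - \f{h(x_0)}{x_0} + \int_{x_0}^{x_1} \f{h(x)}{x^2}\intd x
\leq c + c \log(x_1/x_0).
\]
Integrating $(\log m)'(x) \geq -(2 + h'(x))/(x + h(x))$ from $x_0$ to $x_1$ and exponentiating then produces the desired $-(\M f)'(x_1) \geq e^{-c}(x_0/x_1)^{2+c}\, (-(\M f)'(x_0))$. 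The extra factor $e^{-c}$ absent from \cref{corollary_logexpdecay} is precisely the price of replacing the Lipschitz hypothesis on $h$ by the pointwise bound \cref{eq_abound}; once the need for this small modification is identified, the integration is routine.
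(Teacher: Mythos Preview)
Your argument is correct, but it takes a different route at the crucial step than the paper does. Both proofs start from the same differential inequality
\[
m'(x)\geq -\f{2+h'(x)}{x+h(x)}\,m(x),\qquad m=-(\M f)',\ h=-a,
\]
and both must cope with the fact that \cref{eq_abound} only gives $h(x)\leq cx$ rather than a Lipschitz bound on $h$. The paper simply observes that $h(x_1)-h(x_0)\leq h(x_1)\leq cx_1=\f{cx_1}{x_1-x_0}(x_1-x_0)$, so it can legitimately apply \cref{corollary_logexpdecay} with the data-dependent constant $D=cx_1/(x_1-x_0)$; the resulting exponent $2+\f{cx_1}{x_1-x_0}$ is then reduced to $2+c$ via the elementary inequality $(x_0/x_1)^{x_1/(x_1-x_0)}\geq e^{-1}(x_0/x_1)$, which is where the factor $e^{-c}$ appears. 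You instead bypass \cref{corollary_logexpdecay} altogether and integrate the differential inequality directly, using $h\geq0$ to drop $h$ from the denominator and then integrating $\int h'/x$ by parts against the pointwise bound $h\leq cx$. Your computation is slightly more elementary and makes the origin of the $e^{-c}$ factor transparent (it is exactly the boundary term $h(x_1)/x_1\leq c$); the paper's version has the advantage of reusing \cref{corollary_logexpdecay} as a black box. Either way the conclusion is the same.
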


\begin{proof}
By \cref{eq_abound} for every \(x_0\leq x_1\) we have
\[
(-a(x_1))-(-a(x_0))
\leq
(-a(x_1))
\leq cx_1
	= \f{cx_1}{x_1-x_0}(x_1-x_0)
.
\]
	By \cref{eq_ddmf} and \(f'\leq0\) we may apply \cref{corollary_logexpdecay} with \(m=-(\M f)'\), \(h=-a\), \(K=2\) and \(D=cx_1/(x_1-x_0)\) and obtain
\[
	-(\M f)'(x_1)
	\geq
	\Bigl(
	\f{x_0}{x_1}
	\Bigr)^{2+\f{cx_1}{x_1-x_0}}
	(-(\M f)'(x_0))
	.
\]
We estimate
\[
	\Bigl(\f{x_0}{x_1}\Bigr)^{\f{x_1}{x_1-x_0}}
	=
	\f{x_0}{x_1}
	\Bigl(1+\f1{\f{x_0}{x_1-x_0}}\Bigr)^{-\f{x_0}{x_1-x_0}}
	\geq
	\f{x_0}{x_1}
	\f1e
\]
to finish the proof.
\end{proof}

\begin{corollary}
\label{corollary_mdecay}
For any \(\lambda>0\) there exists a \(d_{c,\lambda}>0\) such that for any \(x>0\) we have
\[
\sup_{
[x,e^\lambda x]
}
f'\circ a
\geq
d_{c,\lambda}
	(-(\M f)'(x))
.
\]
\end{corollary}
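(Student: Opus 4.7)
The plan is to integrate the identity \eqref{eq_da} for $a'$ from $x$ to $e^\lambda x$, bound the total displacement of $a$ on the right using \eqref{eq_abound}, and bound the integrand from below using the decay estimate of \cref{corollary_mdecayorig}.

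Concretely, since $f' \leq 0$ on $(0,\infty)$ we have $(\M f)' \leq 0$ on $(0,\infty)$, and since $f'(a(t)) > 0$ for $t > 0$, \cref{eq_da} gives $a'(t) \leq 0$. Thus $-a$ is nondecreasing on $(0,\infty)$, and integrating
\[
-a'(t) = \f{-(\M f)'(t)}{f'(a(t))}
\]
from $x$ to $e^\lambda x$ together with $-a(e^\lambda x) \leq c e^\lambda x$ from \eqref{eq_abound} (and $-a(x) \geq 0$) yields
\[
\int_x^{e^\lambda x} \f{-(\M f)'(t)}{f'(a(t))} \intd t
\;=\;
-a(e^\lambda x) - (-a(x))
\;\leq\;
c e^\lambda x.
\]

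Next, write $M = \sup_{t\in[x,e^\lambda x]} f'(a(t))$, which is positive. Replacing the denominator $f'(a(t))$ by $M$ in the integral only decreases it, so
\[
\f1M \int_x^{e^\lambda x} -(\M f)'(t) \intd t \;\leq\; c e^\lambda x.
\]
On the other hand, \cref{corollary_mdecayorig} applied with $x_0 = x$ and $x_1 = t \in [x, e^\lambda x]$ gives $-(\M f)'(t) \geq e^{-c}(x/t)^{2+c}(-(\M f)'(x)) \geq e^{-c-\lambda(2+c)}(-(\M f)'(x))$, so
\[
\int_x^{e^\lambda x} -(\M f)'(t) \intd t
\;\geq\;
(e^\lambda - 1)\,x\, e^{-c - \lambda(2+c)} \bigl(-(\M f)'(x)\bigr).
\]
Combining the two inequalities, solving for $M$ and absorbing all the $\lambda$ and $c$ dependent factors into
\[
d_{c,\lambda} \;=\; \f{(e^\lambda - 1) e^{-c - \lambda(3+c)}}{c}
\]
yields the claimed bound.

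The argument is essentially routine once one notices the two ingredients; the only thing to check carefully is the sign bookkeeping (that $a$ is decreasing on $(0,\infty)$ so $-a(e^\lambda x) - (-a(x)) \geq 0$ is exactly the integral of $-a'$, and that $-a(x) \geq 0$ is what allows us to drop it using \eqref{eq_abound}). The hypothesis $c > 0$ enters by preventing the bound on the integral from degenerating; in the limit $c \to 0$ the constant $d_{c,\lambda}$ blows up, which is consistent with the fact that the pointwise decay of $(\M f)'$ would otherwise give no information about $f' \circ a$ at all.
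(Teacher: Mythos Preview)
Your proof is correct and follows essentially the same route as the paper. The paper phrases the argument pointwise---it uses the bound $-a(e^\lambda x)\leq ce^\lambda x$ to find a single $y\in[x,e^\lambda x]$ with $-a'(y)\leq c/(1-e^{-\lambda})$ and then evaluates \eqref{eq_da} at that $y$---whereas you integrate \eqref{eq_da} over the whole interval and pull out the supremum of $f'\circ a$; both arguments use the same three ingredients (the bound on $-a$, the identity \eqref{eq_da}, and \cref{corollary_mdecayorig}) and in fact produce the identical constant $d_{c,\lambda}=(1-e^{-\lambda})e^{-c-\lambda(2+c)}/c$. Your closing remark about $c\to0$ is slightly off: in that limit $d_{c,\lambda}\to\infty$, so the inequality would become stronger, not degenerate; what actually happens is that the hypothesis $|a(x)|\leq c|x|$ becomes incompatible with $a(x)<0$.
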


\begin{proof}
By \cref{eq_abound} there must be a \(x\leq y\leq e^\lambda x\) for which \(-a'(y)\leq c/(1-e^{-\lambda})\) for otherwise \(-a(e^\lambda x)>ce^\lambda x\), contradicting \cref{eq_abound}.
By \cref{eq_da,corollary_mdecayorig} we can conclude
\[
f'(a(y))
=
\f{
	-(\M f)'(y)
}{
-a'(y)
}
\geq
\f{
	e^{-c-\lambda(2+c)}(-(\M f)'(x))
}{
	c/(1-e^{-\lambda})
}
.
\]
\end{proof}

\begin{proposition}
\label{lemma_decreasinginbiginterval}
	Let \(\lambda>0\) and let \(0<x_0<e^\lambda x_0<x_1<\infty\) such that for all \(x_0\leq x\leq x_1\) we have \((\M f)''(x)\geq0\) and for \(i=0,1\) we have \((\M f)''(x_i)=0\).
Then
\[
|
	(\M f)'(x_1)
-
	(\M f)'(x_0)
|
\lesssim_{c,\lambda}
\var_{[x_0,x_1]}(f')
+
\var_{[x_0,x_1]}(f'\circ a)
.
\]
\end{proposition}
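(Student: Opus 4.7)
The plan is a short case analysis keyed on whether $-a'$ is large or small at the endpoints $x_0,x_1$, measured against the threshold $D_0:=2/d_{c,\lambda}$ depending only on $c,\lambda$, with $d_{c,\lambda}$ coming from \cref{corollary_mdecay}. The reason we cannot just invoke \cref{proposition_lowspeed} directly at $x_0,x_1$ is that the only a priori bound on $-a'(x_i)$ is $K^2$ via \cref{lemma_lipschitz}, which would leak into the final constant. The role of \cref{corollary_mdecay} is to trade a large endpoint value of $-a'$ for a genuine jump of $v:=f'\circ a$ inside $[x_0,x_1]$ that gets absorbed into $\var(v)$.

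Write $w:=-(\M f)'$ and $u:=-f'$. Since $(\M f)''\geq 0$ on $[x_0,x_1]$, the function $w$ is nonincreasing there, so $w(x_0)\geq w(x_1)\geq 0$. If $-a'(x_0)\geq D_0$, then $v(x_0)\leq w(x_0)/D_0\leq(d_{c,\lambda}/2)w(x_0)$, while \cref{corollary_mdecay} applied at $x_0$ (permissible since $e^\lambda x_0\leq x_1$) yields $y_0\in[x_0,e^\lambda x_0]$ with $v(y_0)\geq d_{c,\lambda}w(x_0)$, so
\[\var_{[x_0,x_1]}(v)\geq v(y_0)-v(x_0)\geq\tfrac{d_{c,\lambda}}{2}w(x_0),\]
giving $w(x_0)-w(x_1)\leq w(x_0)\lesssim_{c,\lambda}\var_{[x_0,x_1]}(v)$. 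If instead $-a'(x_0)\leq D_0\leq-a'(x_1)$, then $v(x_0)\geq w(x_0)/D_0\geq w(x_1)/D_0\geq v(x_1)$ by the monotonicity of $w$, hence
\[\var_{[x_0,x_1]}(v)\geq v(x_0)-v(x_1)\geq\tfrac{w(x_0)-w(x_1)}{D_0}.\]
Finally, when $-a'(x_i)\leq D_0$ for both $i$, \cref{proposition_lowspeed} applies directly at $x_0,x_1$ with $D=D_0$ and produces the bound with constant $2(2+D_0)^2$.

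The main point to watch is that all constants stay independent of $K$: the trivial bound $-a'\leq K^2$ is useless inside \cref{proposition_lowspeed}, and the heart of the argument is to exchange a large endpoint value of $-a'$ for a quantitative jump of $v$. The hypothesis $x_1\geq e^\lambda x_0$ is used exactly once, to fit the auxiliary point $y_0$ produced by \cref{corollary_mdecay} inside $[x_0,x_1]$, which is where the dependence on $\lambda$ in the final constant enters. No further subdivision of $[x_0,x_1]$ is needed, because in the middle case the monotonicity of $w$ makes the $v$-jump self-absorbing; one also needs $\var(u)$ only in the last case, via \cref{proposition_lowspeed}.
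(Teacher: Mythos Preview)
Your proof is correct and follows essentially the paper's approach: both use \cref{corollary_mdecay} to show that either the variation of $f'\circ a$ on $[x_0,x_1]$ already controls $-(\M f)'(x_0)$, or else $-a'$ is bounded by $2/d_{c,\lambda}$ at the endpoints so that \cref{proposition_lowspeed} applies with a $K$-free constant. The paper phrases this as a single dichotomy on whether $\var_{[x_0,x_1]}(f'\circ a)\leq\tfrac{d_{c,\lambda}}{2}(-(\M f)'(x_0))$ (and in the small case deduces $-a'\leq 2/d_{c,\lambda}$ on the whole interval at once), whereas you split into three cases on the endpoint values of $-a'$; the arguments are interchangeable.
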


\begin{proof}
Note, that by assumption for any \(x_0\leq x\leq x_1\) we have \((\M f)'(x_0)\leq(\M f)'(x)\leq(\M f)'(x_1)\leq0\).
Thus it suffices to consider the case
\[
\var_{[x_0,x_1]}(f'\circ a)
\leq
\f{d_{c,\lambda}}2
(-(\M f)'(x_0))
\]
where \(d_{c,\lambda}\) is the constant on the right hand side in \cref{corollary_mdecay}.
As a consequence for every \(x\in[x_0,x_1]\) we have
\begin{align*}
f'(a(x))
&\geq
\sup_{[x_0,x_1]}
f'\circ a
-
\f{d_{c,\lambda}}2
(-(\M f)'(x_0))
\\
&\geq
\f{d_{c,\lambda}}2
(-(\M f)'(x_0))
\geq
\f{d_{c,\lambda}}2
(-(\M f)'(x))
.
\end{align*}
By \cref{eq_da} this means \(-a'(x)\leq2/d_{c,\lambda}\) and the result follows from \cref{proposition_lowspeed}.
\end{proof}

\begin{proposition}
\label{lemma_anyinsmallintervals}
Let \(\lambda,\mu>0\) and let \(0< x_0\leq x_1\leq e^\lambda x_0<\infty\) and for \(i=0,1\) assume \((\M f)''(x_i)=0\).
Then
\[
	\int_{\{x\in[x_0,x_1]:(\M f)''(x)>0\}}
	(\M f)''
\lesssim_{c,\lambda,\mu}
\var_{[x_0,x_1]}(f')
+
\var_{[x_0,e^\mu x_1]}(f'\circ a)
.
\]
\end{proposition}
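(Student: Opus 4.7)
The proof will follow the pattern of \cref{lemma_decreasinginbiginterval}, but allowing for the fact that $(\M f)''$ may change sign many times inside $[x_0, x_1]$. Write $V := \var_{[x_0, e^\mu x_1]}(f'\circ a)$, let $d := d_{c,\mu}$ be the constant from \cref{corollary_mdecay}, and set $D := 2/d$ and $L := 2V/d$. Call a zero $z \in [x_0, x_1]$ of $(\M f)''$ \emph{tame} if $-(\M f)'(z) \geq L$ and \emph{wild} otherwise. By the same argument as in \cref{lemma_decreasinginbiginterval}, at any tame zero $z$, \cref{corollary_mdecay} produces some $y \in [z, e^\mu z] \subseteq [x_0, e^\mu x_1]$ with $f'(a(y)) \geq d(-(\M f)'(z))$; transferring this to $z$ via the variation bound $V$ forces $f'(a(z)) \geq (d/2)(-(\M f)'(z))$, which by \cref{eq_da} gives $-a'(z) \leq D$. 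This is precisely where the $\mu$-enlargement of the variation interval is used: it accommodates the point $y$ when $z$ is near the right endpoint $x_1$.

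Next, enumerate the zeros of $(\M f)''$ in $[x_0, x_1]$ as $x_0 = z_0 < z_1 < \cdots < z_n = x_1$ (reducing to this finite situation by a standard limiting argument over partitions if the zero set is more complicated), so $(\M f)''$ has constant sign on each $(z_i, z_{i+1})$ and
\[
\int_{\{(\M f)'' > 0\}\cap[x_0,x_1]}(\M f)'' = \sum_{i \in I^+}\bigl((\M f)'(z_{i+1}) - (\M f)'(z_i)\bigr),
\]
where $I^+$ indexes the subintervals on which $(\M f)'' > 0$. For each $i \in I^+$ with both endpoints tame, \cref{proposition_lowspeed} with constant $D$ bounds the increase by $2(2+D)^2(|f'(z_{i+1})-f'(z_i)| + |f'(a(z_{i+1}))-f'(a(z_i))|)$; summing these contributions over all such $i$ gives at most $2(2+D)^2(\var_{[x_0,x_1]}(f') + V)$, since the $z_i$ are ordered and the sums of absolute differences are bounded by the respective variations.

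The main obstacle is controlling the excursions with at least one wild endpoint. An excursion with \emph{both} endpoints wild contributes at most $L \lesssim V$ by monotonicity of $(\M f)'$ on the excursion; the delicate case is a \emph{tame-to-wild} excursion, in which $(\M f)'$ climbs from some $\leq -L$ up to a value greater than $-L$, and whose increase can in principle be as large as $|(\M f)'(z_i)|$. The plan is to couple each such tame-to-wild rise with the subsequent downward stroke that brings $(\M f)'$ back below $-L$; \cref{corollary_mdecayorig} together with $x_1 \leq e^\lambda x_0$ controls the multiplicative ratio by which $-(\M f)'$ can vary, so the return to the tame regime must occur within a bounded logarithmic distance, after which \cref{proposition_lowspeed} applied between tame zeros straddling these bundled excursions absorbs the net rise into the variations of $f'$ on $[x_0,x_1]$ and of $f'\circ a$ on $[x_0, e^\mu x_1]$, at the cost of an extra multiplicative constant depending on $c$, $\lambda$ and $\mu$.
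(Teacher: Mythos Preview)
Your tame/wild decomposition has a genuine gap: you bound each wild--wild excursion by $L$, but you never bound how many of them there are or why their \emph{sum} is controlled. Nothing in your argument prevents $-(\M f)'$ from oscillating many times inside $[x_0,x_1]$ while staying below the threshold $L$, producing an arbitrarily large total positive variation even though each individual rise is at most $L$. Likewise, your handling of the tame-to-wild case is not correct: applying \cref{proposition_lowspeed} between two tame zeros $t_1<t_2$ that bracket a wild stretch controls only the \emph{net} increment $(\M f)'(t_2)-(\M f)'(t_1)$, not the positive part of the variation in between; and \cref{corollary_mdecayorig} only prevents $-(\M f)'$ from \emph{decreasing} too much, so it gives no reason why ``the return to the tame regime must occur within a bounded logarithmic distance''---it may never return.

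What you are missing is the a~priori estimate that the paper extracts directly from \cref{eq_ddmf}: since $f'\leq 0$ on $(0,\infty)$, one has $(\M f)''(x)\leq (2-a'(x))M/x_0$ with $M=\sup_{[x_0,x_1]}(-(\M f)')$, and integrating this over $[x_0,x_1]$ together with $-a(x)\leq cx$ yields
\[
\int_{\{(\M f)''>0\}\cap[x_0,x_1]}(\M f)''\;\lesssim_{c,\lambda}\;M.
\]
This single inequality replaces your entire wild analysis: either the right-hand variation already dominates $M$ (and hence the integral), or it is at most $(d_{c,\mu}/2)M$, in which case the argument via \cref{corollary_mdecay} shows $-a'(x)\leq 2/d_{c,\mu}$ at \emph{every} $x\in[x_0,x_1]$---not just at the zeros you call tame---so \cref{proposition_lowspeed} applies on each connected component of $\{(\M f)''>0\}$ and the sum is bounded by the two variations. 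In particular, once a single zero is tame in your sense (equivalently $M\geq L$), all zeros automatically satisfy the $-a'$ bound; your zero-by-zero classification obscures this.
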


\begin{proof}
	Denote \(M=\sup_{[x_0,x_1]}(-(\M f)')\).
Then by \cref{eq_ddmf} for any \(x_0\leq x\leq x_1\) we have
\[
	(\M f)''(x)
\leq
	\f{-(2-a'(x))(\M f)'(x)}{x-a(x)}
\leq
\f{2-a'(x)}{x_0}M
.
\]
This implies
\begin{align*}
	\int_{\{x\in[x_0,x_1]:(\M f)''(x)>0\}}
	(\M f)''
	&\leq
	\int_{x_0}^{e^\lambda x_0}
	\f{2-a'}{x_0}M
	\\
	&\leq
	\Bigl[
		\f{
			2(e^\lambda x_0-x_0)
		}{x_0}
		+
		\f{
			a(x_0)-a(e^\lambda x_0)
		}{x_0}
		\Bigr]
	M
	\\
	&\leq
	[2(e^\lambda-1)+ce^\lambda]
	M
	.
\end{align*}
That means it suffices to consider the case
\[
\var_{[x_0,e^\mu x_1]}(f'\circ a)
\leq
\f{d_{c,\mu}}2
M
,
\]
where \(d_{c,\mu}\) is the constant on the right hand side in \cref{corollary_mdecay}.
As a consequence for every \(x\in[x_0,x_1]\) we have
\begin{align*}
f'(a(x))
&\geq
\sup_{[x_0,e^\mu x_1]}
f'\circ a
-
\f{d_{c,\mu}}2
M
=
\sup_{x_0\leq y\leq x_1}
\sup_{[y,e^\mu y]}
f'\circ a
-
\f{d_{c,\mu}}2
M
\\
&\geq
\f{d_{c,\mu}}2
M
\geq
\f{d_{c,\mu}}2
(-(\M f)'(x))
\end{align*}
which by \cref{eq_da} means \(-a'(x)\leq2/d_{c,\mu}\).
Since \((\M f)''\) is continuous we can write
\(
\{x\in(x_0,x_1):(\M f)''(x)>0\}
=
\bigcup_{k=1}^\infty
(y_k,z_k)
\)
and by the fundamental theorem of calculus and \cref{proposition_lowspeed} we get
\begin{align*}
\int_{\{x\in[x_0,x_1]:(\M f)''(x)>0\}}
(\M f)''
&=
\sum_{k=1}^\infty
(\M f)'(z_k)-(\M f)'(y_k)
\\
&\lesssim_{c,\mu}
\sum_{k=1}^\infty
|f'(z_k)-f'(y_k)|
+
|f'(a(z_k))-f'(a(y_k))|
\\
&\leq
\var_{[x_0,x_1]}(f')
+
\var_{[x_0,x_1]}(f'\circ a)
.
\end{align*}
\end{proof}

\begin{lemma}
\label{lemma_sequence}
For any compact set \(Z\subset\mathbb{R}\) there exist an \(N\in\mathbb{N}\) and \(u_1,\ldots,u_N\in Z\) with \(u_1=\inf Z,\ u_N=\sup Z\), \(u_k<u_{k+1}\), \(u_k+1< u_{k+2}\) and such that
for each \(k=1,\ldots,N-1\) we have \(u_k+3>u_{k+1}\) or \((u_k,u_{k+1})\cap Z=\emptyset\).
\end{lemma}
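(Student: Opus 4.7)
The plan is a greedy construction. Set $u_1 = \inf Z$, and given $u_k < \sup Z$, define $u_{k+1}$ to be the largest element of $Z \cap (u_k, u_k + 2]$ when this set is nonempty, and otherwise the smallest element of $Z \cap (u_k + 2, \infty)$. Both extrema are attained: in the first case $Z \cap [u_k, u_k + 2]$ is compact, and in the second case the infimum of $Z \cap (u_k + 2, \infty)$ lies in $Z$ by closedness of $Z$ and is strictly above $u_k + 2$, since the case hypothesis forces $u_k + 2 \notin Z$. The construction stops as soon as $u_k = \sup Z$; note that whenever $u_k + 2 \geq \sup Z$ the first branch automatically returns $u_{k+1} = \sup Z$, so we do indeed reach the right endpoint.

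The closeness condition is built into the two branches. In the first branch $u_{k+1} \leq u_k + 2 < u_k + 3$. In the second branch $(u_k, u_{k+1}) \cap Z = \emptyset$: the interval $(u_k, u_k + 2]$ misses $Z$ by the case hypothesis, and the interval $(u_k + 2, u_{k+1})$ misses $Z$ by the minimality of $u_{k+1}$. The separation condition $u_k + 1 < u_{k+2}$ I will deduce from the stronger estimate $u_{k+2} > u_k + 2$. In the first branch, any $z \in Z$ with $z > u_{k+1}$ must lie strictly above $u_k + 2$, for otherwise $z$ would belong to $Z \cap (u_k, u_k + 2]$ and contradict the maximality of $u_{k+1}$; applying this to $z = u_{k+2}$ gives the claim. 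In the second branch $u_{k+2} > u_{k+1} > u_k + 2$ directly.

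Finiteness of the construction then comes for free from the same inequality $u_{k+2} > u_k + 2$: after at most $\lceil \sup Z - \inf Z \rceil + 2$ steps the condition $u_k + 2 \geq \sup Z$ is triggered, and the next step produces $u_N = \sup Z$. There is no genuine obstacle here beyond bookkeeping; the one design choice that does the work is taking the \emph{maximum} (rather than some convenient element) of $Z \cap (u_k, u_k + 2]$, because that is precisely what forces the subsequent pick to leap past $u_k + 2$ and so guarantees the separation $u_k + 1 < u_{k+2}$.
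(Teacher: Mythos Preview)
Your greedy construction is correct and complete: you pick the farthest point of $Z$ within reach $2$, or else jump to the nearest point beyond, and you verify all four required properties (strict increase, separation $u_{k+2}>u_k+2$, the dichotomy, and termination). The only delicate point is that the maximum in the first branch is attained, and you handle this correctly by passing to the compact set $Z\cap[u_k,u_k+2]$.

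The paper proceeds differently: it first enumerates all maximal complementary intervals of $[\inf Z,\sup Z]\setminus Z$ of length at least $2$, takes their endpoints as a coarse skeleton, and then fills each remaining segment with points of $Z$ at spacing in $(1,3)$. This two-phase approach makes the structure of $Z$ (dense pieces separated by large gaps) explicit, at the cost of more bookkeeping when merging the two families of points and checking the separation condition case by case. Your single-pass greedy argument is shorter and avoids that case analysis; the key insight that makes it work is exactly the one you flag, namely taking the \emph{maximum} of $Z\cap(u_k,u_k+2]$ so that the next pick is forced past $u_k+2$.
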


\begin{proof}
The set \(U=[\inf Z,\sup Z]\setminus Z\) is open and bounded.
Enumerate by \((t_{2i-1},t_{2i})\subset U\) the maximal open subintervals of \(U\) of length at least \(2\).
We order them along the real line, denote by \(M\) the number of such intervals and set \(t_0=\inf Z\) and \(t_{2M+1}=\sup Z\).
Then \(t_0,\ldots,t_{2M+1}\in Z\) and for each \(i\) we have \(t_{2(i-1)}\leq t_{2i-1}\leq t_{2i}-2\) and \(U\setminus\bigcup_{i=1}^M(t_{2i-1},t_{2i})\) contains no interval of length \(2\).
For each \(i=0,\ldots,M\) with \(t_{2i+1}>t_{2i}\) there is an \(N_i\geq0\) and \(t_{2i}^0,\ldots,t_{2i}^{N_i}\in Z\) with \(t_{2i}^0=t_{2i}\), \(t_{2i}^{j+1}-t_{2i}^j\in(1,3)\) and \(t_{2i+1}-t_{2i}^{N_i}\in(0,3)\).
Then \(\{t_{2i}^j,t_{2i+1}:i=0,\ldots,M,\ j=0,\ldots,N_i\}\) is finite, and we enumerate it increasingly as \(u_1<\ldots<u_N\), excluding duplicates.

For \(k\) with \(u_k=t_{2i+1}\) for some \(i\) we have \(u_{k+1}=t_{2(i+1)}\) and thus the interval \((u_k,u_{k+1})\) does not intersect \(Z\) and has length at least \(2\).
For \(k\) with \(u_k=t_{2i}^j\) for some \(i\) and \(j<N_i\) we have \(u_{k+1}=t_{2i}^{j+1}\in(u_k+1,u_k+3)\).
For \(j=N_i\) we have \(u_{k+1}=t_{2i+1}<u_k+3\) and \(u_{k+2}=t_{2(i+1)}\geq u_{k+1}+2\).
\end{proof}

\begin{theorem}
\label{theorem_smooth}
For every \(c>0\) there exists a \(C>0\) such that for every \(K>0\) and every \(f:\mathbb{R}\rightarrow\mathbb{R}\) that satisfies \cref{eq_diffboundassumption}, \(|a(x)|\leq c|x|\) and is twice differentiable on \(\mathbb{R}\setminus\{0\}\) we have
\[
\var((\M f)')
\leq C
\var(f')
.
\]
\end{theorem}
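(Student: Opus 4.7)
The plan is to reduce, by the $x\mapsto-x$ symmetry of \cref{eq_diffboundassumption}, to bounding $\var_{(0,\infty)}((\M f)')=\int_0^\infty|(\M f)''|$ together with the jump $|(\M f)'(0^-)-(\M f)'(0^+)|$. The jump is handled by a short computation: combining \cref{eq_diffmf,eq_a,eq_da} at $x=0^+$ yields the quadratic identity $(\M f)'(0^+)^2-2f'(0^-)(\M f)'(0^+)+f'(0^-)f'(0^+)=0$, so $|(\M f)'(0^+)|\leq|f'(0^+)|/2$; together with the analogous estimate on the left, the jump is $\leq\var(f')/2$. Everything else happens on $(0,\infty)$.

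I work in logarithmic coordinates $w=\log x$ on $(0,\infty)$ and consider the closed set $Z=\{w:(\M f)''(e^w)=0\}$. After truncating to $Z\cap[-R,R]$ (and sending $R\to\infty$ at the end), apply \cref{lemma_sequence} to obtain a selection $u_1<\cdots<u_N$ in $Z$ with $u_{k+2}>u_k+1$ and, for each $k$, either the \emph{short} alternative $u_{k+1}-u_k<3$ or the \emph{gap} alternative $(u_k,u_{k+1})\cap Z=\emptyset$. For each $k$ I bound $\int_{[e^{u_k},e^{u_{k+1}}]}|(\M f)''|$. In the short case, \cref{lemma_anyinsmallintervals} (with $\lambda=3$ and a fixed $\mu\in(0,1/2)$) controls the positive-part integral; the negative-part integral is controlled by the identity $\int_-=\int_+-((\M f)'(e^{u_{k+1}})-(\M f)'(e^{u_k}))$, because in the trivial subcase of that proposition the net change is $\lesssim M\lesssim\var(f'\circ a)$ and in the non-trivial subcase the net change is directly bounded by \cref{proposition_lowspeed} since $-a'\leq 2/d_{c,\mu}$ has been established throughout the interval. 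In the gap case $(\M f)''$ has constant sign and $\int|(\M f)''|=|(\M f)'(e^{u_{k+1}})-(\M f)'(e^{u_k})|$: for the nonnegative sign \cref{lemma_decreasinginbiginterval} applies directly, and for the nonpositive sign the same dichotomy works but with \cref{corollary_mdecay} applied at the right endpoint $x_1$ (where $|(\M f)'|$ now attains its maximum), at the cost of a right-extension $\mu$, yielding a bound by $\var_{[e^{u_k},e^{u_{k+1}}]}(f')+\var_{[e^{u_k},e^{u_{k+1}+\mu}]}(f'\circ a)$.

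Summing over $k$: the intervals $[u_k,u_{k+1}]$ are disjoint, so $\sum_k\var_{[e^{u_k},e^{u_{k+1}}]}(f')\leq\var(f')$; the extended intervals $[u_k,u_{k+1}+\mu]$ have bounded overlap multiplicity thanks to $u_{k+2}>u_k+1$ and $\mu<1/2$, so $\sum_k\var_{[e^{u_k},e^{u_{k+1}+\mu}]}(f'\circ a)\lesssim\var(f'\circ a)$; and since $a:(0,\infty)\to(-\infty,0)$ is monotone (its derivative has constant sign by \cref{eq_da} combined with the signs of $f'$ and $(\M f)'$), one has $\var(f'\circ a|_{(0,\infty)})\leq\var(f')$. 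This controls the interior contribution on $[e^{u_1},e^{u_N}]$ by a constant depending only on $c$, uniformly in $K$ and in the truncation parameter $R$. The boundary pieces $(0,e^{u_1})$ and $(e^{u_N},\infty)$ carry $(\M f)''$ of constant sign; I handle them by combining \cref{corollary_mdecayorig,corollary_mdecay} to locate auxiliary points in these tails with $-a'$ bounded, applying a limiting version of \cref{proposition_lowspeed}, and then letting $R\to\infty$.

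The main obstacle is the tail near infinity: \cref{lemma_anyinsmallintervals,lemma_decreasinginbiginterval} and \cref{proposition_lowspeed} all require $(\M f)''=0$ at both endpoints of the interval of interest, but the tail $(e^{u_N},\infty)$ may carry $(\M f)''$ of constant nonzero sign with no zeros at all. The resolution is either to perturb the algebraic identity $uv=w^2+2wv$ underlying \cref{lem_boundbetweenzeronum} to tolerate endpoints where $(\M f)''$ is small but nonzero, or to exploit the polynomial decay in \cref{corollary_mdecayorig} to show that the tail contribution is ultimately dominated by the already-bounded interior contribution, in both cases permitting the limit $R\to\infty$ to be taken cleanly.
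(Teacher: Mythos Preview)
Your core decomposition---logarithmic coordinates, \cref{lemma_sequence} applied to the zero set of $(\M f)''$, and the short/gap dichotomy handled by \cref{lemma_anyinsmallintervals} and \cref{lemma_decreasinginbiginterval}---matches the paper's proof exactly. Your treatment of both signs of $(\M f)''$ on each interval (including the negative-sign gap via \cref{corollary_mdecay} at the right endpoint) is correct, as is your quadratic-identity computation for the jump at $0$.

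The gap is precisely the one you identify in your last paragraph: the tails. What you are missing is the elementary a~priori bound the paper records as \cref{eq_dmflinftybound}, namely
\[
|(\M f)'(x)|\leq\sup_{0<y<x}(-f'(y))\leq\var(f')
\]
for every $x>0$, obtained directly from \cref{eq_diffmf} by $\M f(x)-f(x)\leq f(0)-f(x)$ and $x-a(x)\geq x$. This single line disposes of everything you left open: the jump $|(\M f)'(-1/n)-(\M f)'(1/n)|$, the tail pieces $|(\M f)'(1/n)-(\M f)'(A_n)|$ and $|(\M f)'(B_n)-(\M f)'(n)|$ on which $(\M f)''$ has constant sign, and---via the identity \cref{eq_reducetopositive}---the reduction to the \emph{positive} part $\int_{\{(\M f)''>0\}}(\M f)''$ only. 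With that reduction in hand the paper never needs your separate analysis of negative-sign gaps or of the negative part on short intervals; only \cref{lemma_decreasinginbiginterval} (for $(\M f)''\geq0$) and \cref{lemma_anyinsmallintervals} are invoked, and the proof closes without any perturbation of \cref{lem_boundbetweenzeronum} or limiting version of \cref{proposition_lowspeed}. So your two proposed resolutions of the tail obstacle are unnecessary: \cref{eq_dmflinftybound} is the missing ingredient.
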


\begin{remark}
\Cref{theorem_smooth} is \cref{theorem_general} under the additional assumptions that \(f\) satisfies \cref{eq_diffboundassumption} and is twice differentiable on \(\mathbb{R}\setminus\{0\}\).
\end{remark}

\begin{proof}[Proof of \Cref{theorem_smooth}]
\Cref{eq_diffboundassumption} implies that \(f\) is continuous and that it is increasing on \((-\infty,0)\) and decreasing on \((0,\infty)\).
As a consequence the same is true for \(\M f\) and \(\M f(0)=f(0)\).
\Cref{lem_mandadifferentiable} implies that \(\M f\) is differentiable
everywhere except in \(0\).
By \cref{eq_diffmf} for \(x>0\) we can infer
\[
	|(\M f)'(x)|
	=
	-(\M f)'(x)
	=
	\f{\M f(x)-f(x)}{x-a(x)}
	\leq
	\f{f(0)-f(x)}x
	\leq
	\sup_{0<y<x}
	(-f'(y))
\]
and similarly for \(x<0\) we have
\(
	|(\M f)'(x)|
\leq
\sup_{x<y<0}
f'(y)
.
\)
For every \(x\neq0\) we can conclude
\begin{equation}
\label{eq_dmflinftybound}
	|(\M f)'(x)|
\leq
\var(f')
.
\end{equation}
Moreover we infer that \(\M f\) has a global weak derivative which is pointwise defined everywhere except in \(0\).
Thus \(\var((\M f)')\) is well defined with
\begin{align}
\nonumber
\var((\M f)')
&=
\lim_{n\rightarrow\infty}
\var_{(-n,n)}((\M f)')
\\
\nonumber
&=
\lim_{n\rightarrow\infty}
\var_{(-n,-1/n)}((\M f)')
+
|(\M f)'(-1/n)-(\M f)'(1/n)|
\\
\label{eq_globalversplit}
&\qquad\qquad+
\var_{(1/n,n)}((\M f)')
.
\end{align}
\Cref{lem_mandadifferentiable} implies that \(\M f\) is even three times differentiable on \(\mathbb{R}\setminus\{0\}\) and thus the possibly infinite Radon measure \((\M f)''\) is represented by a continuous function away from \(0\).
In particular, the set
\[
Z_n
=
\{1/n\leq x\leq n:(\M f)''(x)=0\}
\]
is a well defined compact set.
If \(Z_n\neq\emptyset\) then abbreviate
\(
A_n
=
\inf Z_n
\)
and
\(
B_n
=
\sup Z_n
,
\)
and otherwise set \(A_n=B_n=1\) so that \((A_n,B_n)=\emptyset\).
Then
\begin{align}
\nonumber
\var_{(1/n,n)}((\M f)')
&=
|(\M f)'(1/n)-(\M f)'(A_n)|
+
\var_{(A_n,B_n)}((\M f)')
\\
\label{eq_restrictvartozeros}
&\qquad+
|(\M f)'(B_n)-(\M f)'(n)|
,
\end{align}
and since
\begin{align*}
	&
	(\M f)'(B_n)
	-
	(\M f)'(A_n)
	=
	\int_{A_n}^{B_n}
	(\M f)''
	\\
	&\qquad\qquad=
	\int_{A_n<x<B_n:(\M f)''(x)>0}
	|(\M f)''|
	-
	\int_{A_n<x<B_n:(\M f)''(x)\leq0}
	|(\M f)''|
\end{align*}
we have
\begin{align}
	\nonumber
	&
	\var_{(A_n,B_n)}((\M f)')
	\\
	\nonumber
	&\qquad=
	\int_{A_n<x<B_n:(\M f)''(x)\leq0}
	|(\M f)''|
	+
	\int_{A_n<x<B_n:(\M f)''(x)>0}
	|(\M f)''|
	\\
	\label{eq_reducetopositive}
	&\qquad=
	2\int_{A_n<x<B_n:(\M f)''(x)>0}
	|(\M f)''|
	+
	(\M f)'(A_n)
	-
	(\M f)'(B_n)
	,
\end{align}
By symmetry and \cref{eq_globalversplit,eq_restrictvartozeros,eq_reducetopositive,eq_dmflinftybound} it is enough to bound
\[
	\int_{A_n<x<B_n:(\M f)''(x)>0}
	(\M f)''
	\lesssim_c
	\var(f')
\]
in order to finish the proof.

We apply \cref{lemma_sequence} to the compact set
\(
\log(Z_n)
\)
and with \(x_k=\exp(u_k)\) we obtain \(x_1<\ldots<x_N\) with \((\M f)''(x_k)=0\), \(x_1=A_n\), \(x_N=B_n\) and \(x_{k+2}\geq ex_k\) such that for \(k=1,\ldots,N-1\) we have
\begin{enumerate}
\item
\label{item_smallintervals}
\(x_{k+1}\leq e^3 x_k\) or
\item
\label{item_bigintervals}
\((\M f)''\) has no zero between \(x_k\) and \(x_{k+1}\).
\end{enumerate}
By \cref{lemma_anyinsmallintervals} in case \cref{item_smallintervals} and by \cref{lemma_decreasinginbiginterval} in case \cref{item_bigintervals} we obtain
\begin{equation}
\label{eq_intervalbound}
\int_{\{x\in[x_k,x_{k+1}]:(\M f)''(x)>0\}}
(\M f)''
\lesssim_c
\var_{[x_k,x_{k+1}]}(f')
+
\var_{[x_k,ex_{k+1}]}(f'\circ a)
.
\end{equation}
Note, that \(a\) is monotone on \((0,\infty)\), and since \(u_{k+2}\geq ex_k\) for every \(x>0\) there can be at most two different indices \(k\) with \(x_k<x<x_{k+1}\).
So by summing \cref{eq_intervalbound} over \(k=1,\ldots,N-1\) we finish the proof,
\begin{align*}
	\int_{\{A_n\leq x\leq B_n:(\M f)''(x)>0\}}
	(\M f)''
&\lesssim_c
\sum_{k\in\mathbb{Z}}
\bigl(
\var_{[x_k,x_{k+1}]}(f')
+
\var_{[x_k,ex_{k+1}]}(f'\circ a)
\bigr)
\\
&\leq
\var_{(0,\infty)}(f')
+
2
\var_{(0,\infty)}(f'\circ a)
\leq
2\var f
.
\end{align*}
\end{proof}

\section{Approximation}
\label{sec_approximation}

\begin{lemma}
	\label{lemma_approximation}
	Let \(f:\mathbb{R}\rightarrow\mathbb{R}\) be nondecreasing on \((-\infty,0)\), nonincreasing on \((0,\infty)\) and weakly differentiable with \(\var(f')<\infty\).
	Then there exists a sequence of functions \(f_1,f_2,\ldots\) of the same class which moreover are smooth on \(\mathbb{R}\setminus\{0\}\), converge to \(f\) locally uniformly, satisfy
	\[
		\limsup_{n\rightarrow\infty}
		\var(f_n')
		\leq
		\var(f')
		,
	\]
	\begin{equation}
		\label{eq_anbound}
		\min\Bigl\{
		\sup_{n\in\mathbb{N}}
		\sup_{x\neq0}
		|a_n(x)|/|a(x)|
		,
		\limsup_{n\rightarrow\infty}
		\sup_{x\neq0}
		2|a_n(x)|/|x|
		\Bigr\}
		\leq
		1
		,
	\end{equation}
	and for all \(n\in\mathbb{N}\) and \(x\neq0\)
	\begin{equation}
		\label{eq_fndiffbound}
		1/n
		\leq
		-\sign(x)f_n'(x)
		\leq
		n
		.
	\end{equation}
\end{lemma}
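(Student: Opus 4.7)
The plan is to build $f_n$ in three stages and verify the stated properties; the verification of~\cref{eq_anbound} is the delicate part.

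\emph{Stage 1 (derivative clipping).} Define the $1$-Lipschitz side-wise clipping $\psi_n(s)=\max(-n,\min(-1/n,s))$ for $s\leq0$ and $\psi_n(s)=\min(n,\max(1/n,s))$ for $s\geq0$, and set
\[
	\bar f_n(x) = f(0)+\int_0^x \psi_n(f'(t))\intd t.
\]
Then $|\bar f_n'|\in[1/n,n]$ away from $0$ (so~\cref{eq_fndiffbound} holds), the $1$-Lipschitzness yields $\var(\bar f_n')\leq\var(f')$, and $\bar f_n\to f$ locally uniformly by dominated convergence on $|\psi_n(f')-f'|\leq\max(|f'|-n,0)+1/n$. \emph{Stage 2 (mollification).} Smooth $\bar f_n$ on each half-line at scale $\min(\delta_n,|x|/2)$ so the mollifier support never crosses $0$; the resulting $\tilde f_n$ is $C^\infty$ on $\R\setminus\{0\}$, still monotone on each side, satisfies~\cref{eq_fndiffbound} up to a harmless rescaling of $n$, and converges locally uniformly to $\bar f_n$. \emph{Stage 3 (tail barrier).} Subtract a nonnegative smooth $h_n$ that is nondecreasing in $|y|$, vanishes on $[-R_n,R_n]$ (with $R_n\to\infty$), and grows fast enough outside that $A_{f_n}(y,x)\to-\infty$ uniformly as $|y|\to\infty$, while $h_n'$ is taken small enough to preserve~\cref{eq_fndiffbound} and $\var(h_n')\to0$. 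Put $f_n=\tilde f_n-h_n$.

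The properties besides~\cref{eq_anbound} then follow routinely: monotonicity on each half-line and~\cref{eq_fndiffbound} are immediate from the construction, smoothness comes from Stage~2, locally uniform convergence $f_n\to f$ is inherited from each stage (noting $h_n\to0$ on compact sets), and $\limsup_n\var(f_n')\leq\var(f')$ combines Stage~1's variation bound with the variation-continuity of mollification and $\var(h_n')\to 0$.

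The main obstacle is~\cref{eq_anbound}. By construction the barrier forces $a_n(x)\in[-R_n,0)$ for every $x\neq0$ and $n$ large, so $|a_n(x)|\leq R_n$. Hence whenever $|a(x)|\geq R_n$ we have $|a_n(x)|/|a(x)|\leq 1$ directly. For $|a(x)|<R_n$ a comparison of the critical-point equations on $[-R_n,R_n]$ is needed. Since $\bar f_n\leq f$ pointwise (for $n>\|f'\|_\infty$ the upper clipping is inactive and the lower clipping only steepens $f$) and $h_n\geq 0$ subtracts further, the Luiro identity $f_n(a_n)=A_{f_n}(a_n,x)$ can be compared with $f(a)=A_f(a,x)$ to yield $|a_n(x)|\leq|a(x)|$ in this regime, giving the option-A form. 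Making this comparison rigorous and uniform in $x$ is the main technical task. Alternatively one targets the limsup form of~\cref{eq_anbound}: the bound $|a_n|\leq R_n$ gives $2|a_n|/|x|\leq 1$ whenever $|x|\geq 2R_n$, and for $|x|<2R_n$ the tent-like estimate $f_n(y)\leq f_n(0)-|y|/n$ coming from the slope lower bound in~\cref{eq_fndiffbound} together with a comparison to the tent with slope $1/n$ (whose extremal ratio is $\sqrt2-1<1/2$) yields $|a_n(x)|/|x|<1/2$ for $n$ sufficiently large.
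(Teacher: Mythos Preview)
Your argument for \cref{eq_anbound} has a genuine gap that the construction as stated cannot close. The central claim---that $f_n\leq f$ together with the Luiro identity forces $|a_n(x)|\leq|a(x)|$---is false: for the piecewise linear hat with slope $A$ on the left and $-B$ on the right one computes $|a(x)|/|x|=\sqrt{1+B/A}-1$, so steepening $f$ on the \emph{right} (which is precisely what your clipping does wherever $|f'|<1/n$ there) \emph{increases} $|a|$. Concretely, take $f'=1$ on $(-1,0)$, $f'=-\varepsilon_2$ on $(0,1)$, $f'=\varepsilon_1$ on $(-\infty,-1)$ and $f'=-1$ on $(1,\infty)$ with $\varepsilon_1,\varepsilon_2$ small. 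For small $x>0$ and $\varepsilon_2<1/n<1$ your Stage~1 clipping raises the right slope to $1/n$ but leaves the left slope $1$, giving $|\bar a_n(x)|/|a(x)|\approx 1/(n\varepsilon_2)>1$, so the option-A branch fails. For large $x$ the effective slopes become $\varepsilon_1$ and $1$, giving $|a(x)|/|x|\approx1/\sqrt{\varepsilon_1}\gg1/2$; since $\bar f_n=f$ once $n>1/\min(\varepsilon_1,\varepsilon_2)$ this survives the limit and defeats the option-B branch as well. Your tent comparison does not rescue this, because an asymmetric tent with slopes in $[1/n,n]$ has ratio $|a|/|x|$ up to order $n$, not $\sqrt2-1$. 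And the barrier cannot force $|a_n(x)|\leq R_n$ uniformly in $x$: for $|x|\gg R_n$ the maximizing interval for $\M f_n(x)$ must extend well past $-R_n$ regardless of how steep $h_n$ is there.

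The paper's construction avoids this obstruction by setting $f_n=g_n-|x|/\sqrt n$ with $g_n$ a variable-scale mollification of $f$. The tent subtraction is the crucial device: it produces a \emph{uniform} linear gap $f(y)-f_n(y)\geq(1-O(1/\sqrt n))\,|y|/\sqrt n$ on both half-lines, whereas your clipping yields only $f-\bar f_n\geq0$, possibly identically zero on an entire side. Combining that linear gap with the estimate $\M f_n(x)\geq\M f(x)-O\bigl((x^2+a(x)^2)/(x-a(x))\bigr)/\sqrt n$, the paper obtains $f_n(y)\leq\M f_n(x)$ whenever $-y$ exceeds an explicit threshold of the form $\max\{|a(x)|,(1+o(1))|x|/2\}$, which is exactly \cref{eq_anbound}. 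The linear gap, not merely the inequality $f_n\leq f$, is the missing mechanism.
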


\begin{proof}
~\paragraph{Definitions.}
Set \(D=\esssup|f'|\).
Since \(\var(f')<\infty\) we have \(D<\infty\).
Take \(\varphi:\mathbb{R}\rightarrow[0,\infty)\) smooth, supported on \([-1,1]\), even, decreasing away from the origin and with \(\int\varphi=1\), denote \(\alpha_k=2\sup_x|\varphi^{(k)}(x)|\) and set
\[
\psi(x)
=
2
\int_0^{|x|}
\varphi
.
\]
For \(t>0\) denote \(\varphi_t(x)=t^{-1}\varphi(x/t)\) and \(\psi_t(x)=t^2\psi(x/t)\) and for \(n\in\mathbb{N}\) define
\begin{align}
\nonumber
g_n(x)
&=
(f*\varphi_{\psi_{1/n}(x)})(x)
\\
\nonumber
&=
\int
\varphi_{\psi_{1/n}(x)}(y) f(x+y)
\intd y
\\
\label{eq_rescaledconvolution}
&=
\int_{-1}^1
\varphi(y) f(x+\psi_{1/n}(x)y)
\intd y
\end{align}
and
\[
f_n(x)
=
g_n(x)
-
|x|
/
\sqrt n
.
\]

\paragraph{Convergence.}
For every \(x\neq0\) we have
\begin{align}
	\label{eq_psiconst}
	\psi_{1/n}(x)
	&=
	1/n^2
	&x&\geq1/n
	,\\
	\label{eq_psidiff}
	\sign(x)
	\psi_{1/n}'(x)
	&=
	\varphi(n|x|)/n
	\in
	[0,\alpha_0/n]
	,
	\\
	\label{eq_psidiffdiff}
	|
	\psi_{1/n}''(x)
	|
	&=
	|
	\varphi'(n|x|)
	|
	\leq
	\alpha_1
	.
\end{align}
For any \(-1\leq y\leq 1\) we have \(|f(x+\psi_{1/n}(x)y)-f(x)|\leq D\psi_{1/n}(x)\) and thus by \cref{eq_psidiff,eq_rescaledconvolution} we have
\begin{equation}
\label{eq_gnminusf}
|g_n(x)-f(x)|
\leq
D\psi_{1/n}(x)
\leq
\alpha_0D|x|/n
.
\end{equation}
Since \(g_n(x)-f_n(x)=|x|/\sqrt n\) we can conclude \(f_n\rightarrow f\) locally uniformly.

\paragraph{Smoothness.}
The map \(x\mapsto\psi_{1/n}(x)\) is monotone and smooth on \((0,\infty)\) with \(|\psi_{1/n}^{(k)}(x)|\leq n^{k-2}\alpha_k\), and for any \(0<x_0<x_1<\infty\) we have \(t_i\seq\psi_{1/n}(x_i)>0\) and \(\psi_{1/n}([x_0,x_1])=[t_0,t_1]\).
The support of \((t,z)\mapsto\varphi_t(z)\) restricted to \([t_0,t_1]\times\mathbb{R}\) belongs to the compact set \([t_0,t_1]\times[-t_1,t_1]\), on which \((t,z)\mapsto\varphi_t(z)\) has uniform derivative bounds since it is smooth on \((0,\infty)\times\mathbb{R}\).
We can conclude that \((x,y)\mapsto\varphi_{\psi_{1/n}(x)}(y-x)\) is smooth with derivatives bounded uniformly on \([x_0,x_1]\times\mathbb{R}\) and support restricted to \([x_0,x_1]\times\mathbb{R}\) compact.
Since \(f\) is locally integrable this implies that \(g_n,f_n\) are smooth on \(\mathbb{R}\setminus\{0\}\).

\paragraph{Variation bound.}
In particular by \cref{eq_psiconst} for any \(x>1/n\) we have
\(
g_n''(x)
=
(f''*\varphi_{1/n^2})(x)
,
\)
where \(f''\) is a Radon measure, see \cref{subsec_definitions}, and \(|f''|\) its total variation measure.
This implies
\begin{equation}
	\label{eq_awayfromzero}
	\int_{1/n}^\infty|g_n''|
	\leq
	\int_{1/n}^\infty|f''|*\varphi_{1/n^2}
	\leq
	\int_{1/n-1/n^2}^\infty|f''|
	.
\end{equation}
For \(-1\leq y\leq 1\) denote \(u_{n,y}:\mathbb{R}\rightarrow\mathbb{R},\ u_{n,y}(x)=x+\psi_{1/n}(x)y\) so that by \cref{eq_rescaledconvolution} for any \(x\neq0\) we have
\begin{equation}
\label{eq_gndiff}
g_n'(x)
=
\int_{-1}^1
\varphi(y) f'(u_{n,y}(x))u_{n,y}'(x)
\intd y
.
\end{equation}
By \cref{eq_psidiff} we have
\begin{equation}
\label{eq_coordtransformdiff}
u_{n,y}'(x)
=
1+y\psi_{1/n}'(x)
\in
[1-\alpha_0/n,1+\alpha_0/n]
,
\end{equation}
so that since \(u_{n,y}(0)=0\) for \(n>\alpha_0\) we have \(\sign(u_{n,y}(x))=\sign(x)\), and by \cref{eq_gndiff} we obtain
\begin{align*}
	0
	&\leq
	-\sign(x)g_n'(x)
	\leq
	(1+\alpha_0/n)D
	,\\
	1/\sqrt n
	&\leq
	-\sign(x)f_n'(x)
	\leq
	(
	1
	+
	\alpha_0/n
	)
	D
	+
	1/\sqrt n
	,
\end{align*}
which ensures \cref{eq_fndiffbound} for a subsequence.
By \cref{eq_gndiff} we have
\[
g_n''(x)
=
\int_{-1}^1
\varphi(y)
\Bigl[
f''(u_{n,y}(x))(u_{n,y}'(x))^2
+
f'(u_{n,y}(x))u_{n,y}''(x)
\Bigr]
\intd y
,
\]
and thus by \cref{eq_psidiffdiff,eq_coordtransformdiff} we obtain
\begin{align*}
	\int_0^{1/n}
	|g_n''|
	&\leq
	\int_0^{1/n}
	\int_{-1}^1
	\varphi(y)
	\Bigl[
		\bigl|
		f''(u_{n,y}(x))
		\bigr|
		(1+\alpha_0/n)^2
		+
		\alpha_1D
	\Bigr]
	\intd y
	\intd x
	\\
	&\leq
	\f{\alpha_1D}n
	+
	(1+\alpha_0/n)^2
	\int_{-1}^1
	\varphi(y)
	\int_0^{1/n}
	\bigl|
	f''(u_{n,y}(x))
	\bigr|
	\intd x
	\intd y
	,
\end{align*}
where by \cref{eq_coordtransformdiff} we have
\begin{align*}
\int_0^{1/n}
|f''(u_{n,y}(x))|
\intd x
&=
\int_{u_{n,y}(0)}^{u_{n,y}(1/n)}
\f{
|f''(z)|
}{
u_{n,y}'(u_{n,y}^{-1}(z))
}
\intd z
\\
&\leq
\f1{1-\alpha_0/n}
\int_0^{1/n+1/n^2}
|f''(z)|
\intd z
.
\end{align*}
That means
\begin{equation}
	\label{eq_nearzero}
	\int_0^{1/n}
	|g_n''|
	\leq
	\f{\alpha_1D}n
	+
	\f{(1+\alpha_0/n)^2}{1-\alpha_0/n}
	\int_0^{1/n+1/n^2}
	|f''|
	.
\end{equation}
From \cref{eq_awayfromzero,eq_nearzero} and since \(f_n''=g_n''\) on \(\mathbb{R}\setminus\{0\}\) and \(\var(f')<\infty\) we can conclude
\begin{align*}
	\limsup_{n\rightarrow\infty}
	\var_{(0,\infty)}(f_n')
	&=
	\limsup_{n\rightarrow\infty}
	\int_0^\infty|f_n''|
	=
	\limsup_{n\rightarrow\infty}
	\int_0^{1/n}|g_n''|
	+
	\int_{1/n}^\infty|g_n''|
	\\
	&\leq
	\int_0^\infty|f''|
	+
	\limsup_{n\rightarrow\infty}
	\int_{1/n-1/n^2}^{1/n+1/n^2}
	|f''|
	\\
	&=
	\var_{(0,\infty)}(f')
\end{align*}
and by symmetry
\[
	\limsup_{n\rightarrow\infty}
	\var_{\mathbb{R}\setminus\{0\}}(f_n')
	\leq
	\var_{\mathbb{R}\setminus\{0\}}(f')
	.
\]
By \(\var_{\mathbb{R}\setminus\{0\}}(f')<\infty\), \(f'(x)\) converges to some real values \((f')_-(0)\) and \((f')_+(0)\) for \(x\uparrow0\) and \(x\downarrow0\).
The same is true for \(f_n\) and thus
\[
\var(f_n')
=
|
(f_n')_+(0)
-
(f_n')_-(0)
|
+
\var_{\mathbb{R}\setminus\{0\}}(f_n')
.
\]
Since by \cref{eq_psiconst,eq_psidiff,eq_coordtransformdiff} we have \(u_{n,y}(x)\rightarrow x,\ u_{n,y}'(x)\rightarrow1\) for \(n\rightarrow\infty\) uniformly in \(x\in\mathbb{R},\ y\in[-1,1]\), it follows from \cref{eq_gndiff} that \((f_n')_\pm(0)\rightarrow(f')_\pm(0)\) as \(n\rightarrow\infty\) and we can conclude
\[
	\limsup_{n\rightarrow\infty}
	\var(f_n')
	\leq
	|
	f_+(0)
	-
	f_-(0)
	|
	+
	\var_{\mathbb{R}\setminus\{0\}}(f')
	=
	\var(f')
	.
\]

\paragraph{Bound on \(a_n\).}
For any \(y<0<x\) by \cref{eq_gnminusf} we have
\begin{align*}
\int_y^x f
-
\int_y^x f_n
&=
\int_y^x f-g_n
+
\int_y^x g_n-f_n
\\
&\leq
\int_y^x
\f{
D\alpha_0|z|
}n
\intd z
+
\int_y^x
\f{|z|}{\sqrt n}
\intd z
=
\f{\sqrt n+D\alpha_0}{2n}(x^2+y^2)
\end{align*}
and inserting \(y=a(x)\) we obtain
\begin{equation}
\label{eq_mfngeqmf}
\M f_n(x)
\geq
\f1{x-a(x)}
\int_{a(x)}^x f_n
\geq
\M f(x)
-
\f{\sqrt n+D\alpha_0}{2n}
\f{
x^2+a(x)^2
}{
x-a(x)
}
.
\end{equation}
Denote
\[
	A(x)
	=
	\max\Bigl\{
	\f{\sqrt n+D\alpha_0}{2(\sqrt n-D\alpha_0)}
	\f{
	x^2+a(x)^2
	}{
	x-a(x)
	}
	,
	-a(x)
	\Bigr\}
	.
\]
Then for \(-y\geq A(x)\) by \cref{eq_a} and since \(f\) is nondecreasing on \((-\infty,0)\) we have \(f(y)\leq\M f(x)\) and together with \cref{eq_gnminusf,eq_mfngeqmf} we obtain
\begin{align*}
f_n(y)
&=
g_n(y)
-
(-y)/\sqrt n
\\
&\leq
f(y)
-
(\sqrt n-D\alpha_0)(-y)/n
\\
&\leq
\M f(x)
-
(\sqrt n-D\alpha_0)(-y)/n
\\
&\leq
\M f_n(x)
,
\end{align*}
which means \(-a_n(x)\leq A(x)\).
Since \(\lim_{n\rightarrow\infty}(\sqrt n+D\alpha_0)/(\sqrt n-D\alpha_0)=1\) and \((x^2+a(x)^2)/(x-a(x))\leq\max\{x,-a(x)\}\) we can conclude \cref{eq_anbound} by symmetry.
\end{proof}

\begin{lemma}
\label{lemma_Mfconverge}
Let \(f_n\rightarrow f\) locally uniformly with \(|a(x)|,|a_n(x)|\leq cx\).
Then \(\M f_n\rightarrow\M f\) locally uniformly.
\end{lemma}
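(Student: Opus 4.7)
The plan is to use the hypothesis \(|a(x)|,|a_n(x)|\leq c|x|\) (which is how I read the stated bound \(cx\), completed by symmetry for \(x<0\)) to confine, at each fixed \(x\), the averaging intervals realizing both maximal functions to a single compact set, on which locally uniform convergence of \(f_n\) transfers directly to uniform convergence of the averages.

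First I would fix \(R>0\) and aim to show \(\M f_n\to\M f\) uniformly on \([-R,R]\). By symmetry of the radial/decreasing setup the regions \(x\in[0,R]\) and \(x\in[-R,0]\) are handled analogously, so I restrict to \(x\in[0,R]\). The endpoint \(x=0\) is treated separately: as noted earlier, \(\M f(0)=f(0)\) and \(\M f_n(0)=f_n(0)\), and \(f_n(0)\to f(0)\) by locally uniform convergence.

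For \(x\in(0,R]\) the hypothesis places both \(a(x)\) and \(a_n(x)\) in \([-cx,0]\subset[-cR,0]\), so the intervals \([a(x),x]\) and \([a_n(x),x]\) realizing \(\M f(x)=A_f(a(x),x)\) and \(\M f_n(x)=A_{f_n}(a_n(x),x)\) both lie in the fixed compact set \([-cR,R]\). Setting \(\varepsilon_n=\|f_n-f\|_{L^\infty([-cR,R])}\), which tends to zero by locally uniform convergence, I would chain
\[
\M f_n(x)=A_{f_n}(a_n(x),x)\leq A_f(a_n(x),x)+\varepsilon_n\leq\M f(x)+\varepsilon_n,
\]
where the first inequality follows from the trivial estimate \(|A_{f_n}(y,x)-A_f(y,x)|\leq\|f_n-f\|_{L^\infty([y,x])}\) and the second from \(A_f(y,x)\leq\M f(x)\) for any admissible \(y\). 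The symmetric chain beginning at \(A_f(a(x),x)\) yields \(\M f(x)\leq\M f_n(x)+\varepsilon_n\), so \(|\M f_n(x)-\M f(x)|\leq\varepsilon_n\) uniformly in \(x\in[0,R]\).

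I do not expect a substantive obstacle: the entire argument rests on the observation that the length bound implicit in \(|a(x)|,|a_n(x)|\leq c|x|\) uniformly confines the relevant averaging intervals to \([-cR,R]\) for all \(x\in[0,R]\), where \(f_n\to f\) uniformly. The only mild subtlety is isolating the point \(x=0\) so as not to divide by \(x-a(x)=0\) in the \(A_f\) formula.
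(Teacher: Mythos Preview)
Your argument is correct and essentially identical to the paper's: both confine the optimizing endpoints to the compact set \([-cR,R]\) via the hypothesis \(|a(x)|,|a_n(x)|\le c|x|\), and then bound \(|\M f_n(x)-\M f(x)|\) by \(\|f_n-f\|_{L^\infty([-cR,R])}\). The paper writes the two inequalities as one line using the sup over \(y\) directly rather than invoking the maximizers \(a_n(x),a(x)\), and does not single out \(x=0\) explicitly, but these are cosmetic differences.
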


\begin{proof}
Let \(|x|\leq r\) and \(\varepsilon>0\).
Take \(N\in\mathbb{N}\) such that for all \(n\geq N\) and \(y\in[-cr,r]\) we have \(|f_n(y)-f(y)|\leq \varepsilon\).
Then for any \(n\geq N\) we have
\[
\M f(x)
=
\sup_{-cr\leq y\leq 0}
\f1{|x-y|}
\int_y^x
f
\geq
\sup_{-cr\leq y\leq 0}
\f1{|x-y|}
\int_y^x
f_n
-
\varepsilon
\geq
\M f_n(x)
-
\varepsilon
.
\]
We can show \(\M f_n(x)\geq\M f(x)-\varepsilon\) the same way.
\end{proof}

\begin{lemma}
\label{lemma_varapprox}
Assume \(f_n\rightarrow f\) in \(L^1_\loc\).
Then
\[
\var(f')
\leq
\liminf_{n\in\mathbb{N}}
\var(f_n')
.
\]
\end{lemma}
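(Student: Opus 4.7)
The plan is to prove the bound by dualizing and transferring both derivatives onto a smooth test function. We may assume $L := \liminf_n \var(f_n') < \infty$, as otherwise there is nothing to show; after passing to a subsequence, $\var(f_n') \to L$ and each $f_n$ is weakly differentiable.

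For any $\psi \in C^2_{\tx c}(\mathbb{R})$ with $|\psi| \leq 1$, note that $\psi' \in C^1_{\tx c}(\mathbb{R})$, so integrating by parts and applying the definition of $\var(f_n')$ to $\psi$ as a test function gives
\[
\int f_n \psi'' \;=\; -\int f_n' \psi' \;\leq\; \var(f_n').
\]
Since $\psi'' \in C_{\tx c}(\mathbb{R})$ and $f_n \to f$ in $L^1_\loc$, the left side converges to $\int f \psi''$, so $\int f \psi'' \leq L$. Applied to $\pm\psi$, this says that the distribution $f''$ extends to a continuous linear functional of norm at most $L$ on $C_0(\mathbb{R})$; by the Riesz representation theorem it is a finite signed Radon measure of total variation at most $L$, which in particular forces $f$ to admit a weakly differentiable representative whose derivative $f'$ is of bounded variation.

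It remains to pass from the $C^2_{\tx c}$ bound above to the $C^1_{\tx c}$ test functions appearing in the definition of $\var(f')$. For any $\varphi \in C^1_{\tx c}(\mathbb{R})$ with $|\varphi|\leq 1$, I would mollify to $\varphi_\varepsilon = \varphi * \rho_\varepsilon$ with a standard smooth mollifier $\rho_\varepsilon$; then $\varphi_\varepsilon \in C^\infty_{\tx c}(\mathbb{R})$, $|\varphi_\varepsilon| \leq 1$, the supports lie in a common compact set, and $\varphi_\varepsilon' \to \varphi'$ uniformly. Applying the display with $\psi = \varphi_\varepsilon$ and integrating by parts backwards (legal now that $f$ is weakly differentiable with $f' \in L^1_\loc$) gives $-\int f' \varphi_\varepsilon' \leq L$; sending $\varepsilon \to 0$ and taking the supremum over $\varphi$ yields $\var(f') \leq L$. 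The main technical obstacle is the deduction, from the $C^2_{\tx c}$ bound on $\int f\psi''$, that $f$ is weakly differentiable in the first place; once this step is in hand the mollification and the two integrations by parts are entirely routine.
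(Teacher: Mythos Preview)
Your approach is essentially the same as the paper's: integrate by parts twice to shift both derivatives onto the test function, use $L^1_\loc$ convergence, bound by $\var(f_n')$, and take the supremum. The paper does this in one line using smooth $\varphi$ with $\|\varphi\|_\infty\leq1$, tacitly using that the supremum over $C^\infty_{\tx c}$ test functions already gives $\var(f')$; your mollification paragraph is exactly the standard justification of that step.

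The Riesz representation detour to establish weak differentiability of $f$ is extra work you do not need here: for $\var(f')$ to even be defined under the paper's conventions, $f'$ must already be an $L^1_\loc$ function, so weak differentiability of $f$ is implicit in the statement (and in the application, $f$ is $\M g$ for a Lipschitz maximal function). So the ``main technical obstacle'' you flag is not actually present, and once you drop that paragraph your proof collapses to the paper's.
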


\begin{proof}
Let \(\varphi\) with \(\|\varphi\|_\infty\leq1\) be smooth and compactly supported.
Then
\[
-\int f'\varphi'
=
\int f\varphi''
=
\lim_{n\rightarrow\infty}
\int f_n\varphi''
=
-
\lim_{n\rightarrow\infty}
\int f_n'\varphi'
\leq
\liminf_{n\rightarrow\infty}
\var(f_n')
.
\]
We take the supremum over all such \(\varphi\) to obtain \(\var(f')\) and conclude the proof.
\end{proof}

\begin{proof}[Proof of \cref{theorem_general}]
Let \(f_1,f_2,\ldots\) be the sequence of functions from \cref{lemma_approximation}.
They satisfy the assumptions in \cref{theorem_smooth}, albeit with constant \(\max\{c,1/2+\varepsilon\}\) instead of \(c\).
Moreover, \(f_n\) converges to \(f\) locally uniformly which by \cref{lemma_Mfconverge} implies that \(\M f_n\) converges to \(\M f\) locally uniformly.
Thus using \cref{lemma_varapprox} we can conclude
\[
\var((\M f)')
\leq
\liminf_{n\rightarrow\infty}
\var((\M f_n)')
\lesssim_c
\liminf_{n\rightarrow\infty}
\var(f_n')
\leq
\var(f')
.
\]
\end{proof}

\section{Counterexample}
\label{sec_counterexample}

Let \(\varepsilon>0\), \(N\in\mathbb{N}\) be odd and let \(M\in\mathbb{N}\).
Let \(f:\mathbb{R}\rightarrow(-\infty,0]\) be the unique continuous function with \(f(0)=0\) and
\[
f'(x)=
\begin{cases}
-1 & x>0\\
\f\varepsilon M & -1<x<0\\
\varepsilon & -M^{n+1}<x<-M^n,\ 0\leq n\leq N\tx{ even}\\
\f\varepsilon{M^{n+1}} & -M^{n+1}<x<-M^n,\ 1\leq n\leq N\tx{ odd}\\
\f\varepsilon{M^{N+1}} & x<-M^{N+1}
,
\end{cases}
\]
and let
\[
g(x)=
\begin{cases}
-x & x>0\\
\f{x\varepsilon}M & -1<x\leq0\\
(M^n-M^{n-1}+x)\varepsilon & -M^{n+1}<x\leq-M^n,\ 0\leq n\leq N\tx{ even}\\
-M^n\varepsilon & -M^{n+1}<x\leq-M^n,\ 1\leq n\leq N\tx{ odd}\\
-M^N\varepsilon & x\leq-M^{N+1}
\end{cases}
\]

For \(r>0\) denote \(P_r=\{(\varepsilon,N,M):\varepsilon^2+1/N^2+1/M^2<r^2\}\).
In this \lcnamecref{sec_counterexample} any instance of \(\ord(\varphi)\) for some formula \(\varphi\) in \(\varepsilon,N,M\) stands for a different function \(F:P_r\rightarrow\mathbb{R}\) for some \(r>0\) such that
\[
\sup_{P_r}
\f{
|F(\varepsilon,N,M)|
}{
|\varphi(\varepsilon,N,M)|
}
<
\infty
.
\]
For \(t\neq0\) and formulas \(\varphi,\psi\) it satisfies the linearity rules \(\ord(t\cdot\varphi)=\ord(\varphi)\) and \(\ord(\varphi)\Box\ord(\psi)=\ord(\varphi\Box \psi)\) for \(\Box\in\{+,*\}\) in the sense that for the functions on the left hand side appropriate functions on the right hand side exist and vice versa.
Let \(\varphi\) with \(\lim_{r\rightarrow0}\inf_{P_r}\varphi(\varepsilon,N,M)>-1\) and \(\lim_{r\rightarrow0}\sup_{P_r}\varphi(\varepsilon,N,M)<\infty\) and let \(t>0\).
Since the map \(x\mapsto(1+x)^t\) is strictly monotone and has derivative near \(t\) in a neighborhood of \(0\) we also have
\begin{equation}
\label{eq_ordpower}
(1+\ord(\varphi))^t
=
1+\ord(\varphi)
.
\end{equation}
We will also frequently use that a geometric sum is dominated by its the largest summand,
\[
\sum_{k\in\mathbb{Z},\ k\leq n}
M^k
=
\f{M^n}{1-1/M}
=
(1+\ord(1/M))
M^n
.
\]

\begin{lemma}
\label{lem_fsimg}
For any \(x>-M^{N+1}\) we have \(f(x)=(1+\ord(1/M))g(x)\).
\end{lemma}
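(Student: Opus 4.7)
My plan is to verify the claim separately on each piece of the piecewise definition of $g$. For $x\in(-1,\infty)$ the two functions actually agree identically: integrating $f'=-1$ from $f(0)=0$ gives $f(x)=-x=g(x)$ on $(0,\infty)$, and integrating $f'=\varepsilon/M$ gives $f(x)=x\varepsilon/M=g(x)$ on $(-1,0]$, so the conclusion is in fact exact there.

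For $x\in(-M^{N+1},-1]$, the first step is to analyze $f$ at the breakpoints $x=-M^n$. Setting $T_n:=f(-M^n)$ and integrating the piecewise constant $f'$ across each interval gives the alternating recurrence $T_{n+1}-T_n=-\varepsilon(M-1)M^n$ when $n$ is even and $T_{n+1}-T_n=-\varepsilon(1-1/M)$ when $n$ is odd, starting from $T_0=-\varepsilon/M$. A short telescoping (or induction on $n$) then yields $T_n=-\varepsilon M^{n-1}+\ord(\varepsilon M^{n-2})$ for $n$ even and $T_n=-\varepsilon M^n+\ord(\varepsilon M^{n-1})$ for $n$ odd. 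Reading off the values $g(-M^n)=-\varepsilon M^{n-1}$ ($n$ even) and $g(-M^n)=-\varepsilon M^n$ ($n$ odd) from the formula for $g$, the discrepancy $S_n:=T_n-g(-M^n)$ satisfies $|S_n|\leq C\varepsilon M^{n-1}/M$ for some universal $C$.

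The second step is to extend this endpoint bound to the interior of each interval $I_n:=(-M^{n+1},-M^n]$. Since $f'$ and $g'$ are piecewise constant on $I_n$, the difference $f-g$ is piecewise affine there: for $n$ even one has $f'=g'=\varepsilon$, so $f-g\equiv S_n$ on $I_n$, and for $n$ odd one has $f'-g'=\varepsilon/M^{n+1}$, so $(f-g)(x)$ interpolates linearly between the values $S_{n+1}$ at $x=-M^{n+1}$ and $S_n$ at $x=-M^n$. In either case $|f(x)-g(x)|\leq\max(|S_n|,|S_{n+1}|)\leq C\varepsilon M^{n-1}/M$ on $I_n$. Meanwhile $|g(x)|\geq\varepsilon M^{n-1}$ on $I_n$, since on odd $I_n$ the function $g$ is constant with $|g|=\varepsilon M^n$ and on even $I_n$ the minimum of $|g|$ is attained at $x=-M^n$ with value $\varepsilon M^{n-1}$. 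Dividing then gives $|f-g|/|g|\leq C/M$ uniformly on $I_n$, which is exactly $f(x)=(1+\ord(1/M))g(x)$.

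I expect the main obstacle to be the bookkeeping in the telescoping for $T_n$: the recurrence alternates between two very different step sizes ($\sim\varepsilon M^n$ on even intervals versus $\sim\varepsilon$ on odd intervals), so each $T_n$ is a geometric-type sum whose dominant term must be isolated carefully to verify the $\ord(\varepsilon M^{n-1}/M)$ bound for $S_n$. Once this is under control, the extension from the breakpoints to the interior and the comparison with the lower bound on $|g|$ are routine because $f-g$ is affine on each $I_n$.
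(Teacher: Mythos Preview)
Your approach is essentially the same as the paper's: verify $f=g$ on $(-1,\infty)$, compute $f(-M^n)$ via the recurrence and compare with $g(-M^n)$, then propagate to the interior of each interval $(-M^{n+1},-M^n]$ using that $f-g$ is affine there. The paper organizes the interior step for odd $n$ slightly differently (bounding $|f(x)-f(-M^n)|\leq\varepsilon$ directly rather than interpolating to $S_{n+1}$), but the substance is identical.

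There is, however, a bookkeeping slip in your uniform bound on $S_n$. You correctly derive $T_n=-\varepsilon M^{n-1}+\ord(\varepsilon M^{n-2})$ for $n$ even and $T_n=-\varepsilon M^n+\ord(\varepsilon M^{n-1})$ for $n$ odd, but then assert $|S_n|\leq C\varepsilon M^{n-1}/M=C\varepsilon M^{n-2}$ uniformly. For odd $n$ this is false: already $S_1=\varepsilon(1-1/M)$, which is of order $\varepsilon$, not $\varepsilon/M$. Your final division step then combines this wrong bound with the weak lower bound $|g|\geq\varepsilon M^{n-1}$, and as written the odd case does not close. The fix is simply to keep the two parities separate, as you already have the ingredients: on even $I_n$ use $|f-g|=|S_n|=\ord(\varepsilon M^{n-2})$ against $|g|\geq\varepsilon M^{n-1}$, and on odd $I_n$ use $|f-g|\leq\max(|S_n|,|S_{n+1}|)=\ord(\varepsilon M^{n-1})$ against the \emph{sharper} $|g|=\varepsilon M^n$. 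Both ratios are then $\ord(1/M)$.
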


\begin{proof}
For all \(x>-1\) we have \(f(x)=g(x)\).
For even \(0\leq n\leq N\) we have
\[
f(-M^n)
-
f(-M^{n+1})
=
\int_{-M^{n+1}}^{-M^n}
f'
=
(1-1/M)M^{n+1}\varepsilon
\]
and for odd \(1\leq n\leq N\) we have
\[
f(-M^n)
-
f(-M^{n+1})
=
(1-1/M)\varepsilon
.
\]
That means for every \(0\leq n\leq N\) we have
\begin{align}
\nonumber
f(-M^n)
&=
f(-1)
+
\sum_{k=0}^{n-1}
f(-M^{k+1})
-
f(-M^k)
\\
\nonumber
&=
-\f\varepsilon M
-\varepsilon(1-1/M)
\Bigl(
\sum_{k=0,2,\ldots,2\lceil n/2\rceil-2}
M^{k+1}
+
\sum_{k=1,3,\ldots,2\lfloor n/2\rfloor-1}
1
\Bigr)
\\
\nonumber
&=
-\varepsilon
(1+\ord(1/M))
M^{2\lceil n/2\rceil-1}
\\
\label{eq_fgatMn}
&=
(1+\ord(1/M))
g(-M^n)
.
\end{align}
Let \(0\leq n\leq N\) and \(-M^{n+1}<x<-M^n\).
If \(n\) is even then \(f'(x)=\varepsilon=g'(x)\) and we can conclude from \cref{eq_fgatMn} that
\begin{align*}
|f(x)-g(x)|
&=
|f(-M^n)-g(-M^n)|
=
\ord(1/M)
|f(-M^n)|
\leq
\ord(1/M)
|f(x)|
.
\end{align*}
If \(n\) is odd then using the definitions of \(f_n,g_n\) and \cref{eq_fgatMn} repeatedly we obtain
\begin{align*}
|f(x)-g(x)|
&\leq
|f(x)-f(-M^n)|
+
|f(-M^n)-g(-M^n)|
\\
&\leq
\varepsilon
+
\ord(1/M)
|f(-M^n)|
\leq
\ord(1/M)
(
|g(-M^n)|
+
|f(-M^n)|
)
\\
&\leq
\ord(1/M)
|f(x)|
.
\end{align*}
\end{proof}

\begin{lemma}
\label{lem_gint}
For any odd \(1\leq n\leq N\) and \(-M^{n+1}\leq x\leq-M^n\) we have
\[
\int_x^0
g
=
(1+\ord(1/M))
M^n\varepsilon
(M^n/2+x)
.
\]
\end{lemma}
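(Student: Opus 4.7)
The plan is to split $\int_x^0 g = \int_x^{-M^n} g + \int_{-M^n}^0 g$ and evaluate each piece using the explicit piecewise definition of $g$. On the first piece, since $n$ is odd and $-M^{n+1} \le x \le -M^n$, the function $g$ is the constant $-M^n\varepsilon$, so
\[
    \int_x^{-M^n} g
    =
    -M^n\varepsilon(-M^n - x)
    =
    M^n\varepsilon(M^n+x)
\]
exactly. This contributes exactly the $M^n\varepsilon\cdot x$ part of the claimed leading term, together with $M^{2n}\varepsilon$.

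For $\int_{-M^n}^0 g$ I would decompose over the intervals $(-M^{k+1},-M^k)$ for $k=0,\ldots,n-1$ and $(-1,0)$. On each piece $g$ is either constant (odd $k$) or affine (even $k$), so every sub-integral is a direct computation. The dominant piece is $k=n-1$, which is even because $n$ is odd, and there one computes
\[
    \int_{-M^n}^{-M^{n-1}}(M^{n-1}-M^{n-2}+x)\varepsilon\intd x
    =
    -\f{\varepsilon M^{2n}}2\bigl(1+\ord(1/M)\bigr).
\]
For $k\le n-2$ the integrals are at most $\ord(M^{2k+2}\varepsilon)$, and since these form a geometric progression dominated by the largest summand, their total is bounded by $\ord(M^{2n-2}\varepsilon)=\ord(1/M)\cdot M^{2n}\varepsilon$; the contribution from $(-1,0)$ is $\ord(\varepsilon/M)$, which is negligible. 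Hence
\[
    \int_{-M^n}^0 g
    =
    -\f{\varepsilon M^{2n}}2\bigl(1+\ord(1/M)\bigr).
\]

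Adding the two pieces,
\[
    \int_x^0 g
    =
    M^n\varepsilon(M^n+x)-\f{M^{2n}\varepsilon}2+\ord(1/M)M^{2n}\varepsilon
    =
    M^n\varepsilon\bigl(M^n/2+x\bigr)+\ord(1/M)M^{2n}\varepsilon.
\]
To convert the additive error into the multiplicative form $(1+\ord(1/M))$ asked for, I would use that $x\le -M^n$ forces $|M^n/2+x|\ge M^n/2$, so $M^{2n}\varepsilon\le 2M^n\varepsilon|M^n/2+x|$; the error term is therefore of the form $\ord(1/M)\cdot M^n\varepsilon(M^n/2+x)$ and can be folded into the factor $1+\ord(1/M)$ multiplying the main term, yielding exactly the stated identity.

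The main obstacle is not conceptual but bookkeeping: one must verify that the sub-leading integrals really telescope into a geometric series bounded by $\ord(M^{2n-2}\varepsilon)$, and one must justify the final step of absorbing an additive $\ord(M^{2n}\varepsilon)$ error into a multiplicative factor, which relies crucially on the restriction $x\le-M^n$ that bounds $|M^n/2+x|$ from below by $M^n/2$. Care is required so that the same argument also works in the boundary case $n=1$, where the interval $(-M^n,-M^{n-1})=(-M,-1)$ is the only nonconstant piece and the remainder collapses to $\int_{-1}^0 g=-\varepsilon/(2M)$.
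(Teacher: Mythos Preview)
Your proof is correct and follows essentially the same route as the paper: split at $-M^n$, compute $\int_x^{-M^n}g$ exactly, estimate $\int_{-M^n}^0 g$ by summing the pieces over $(-M^{k+1},-M^k)$ and $(-1,0)$, and add. The paper differs only cosmetically (it writes out both the odd-$k$ and even-$k$ sub-integrals explicitly before summing the geometric series); your explicit justification of the additive-to-multiplicative conversion via $|M^n/2+x|\ge M^n/2$ is in fact a step the paper leaves implicit.
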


\begin{proof}
Let \(1\leq k\leq n\) be odd and let \(-M^{k+1}\leq y\leq -M^k\).
Then
\begin{equation}
\label{eq_ytoMk}
\int_y^{-M^k}
g
=
M^k\varepsilon
(M^k+y)
\end{equation}
which equals \((1+\ord(1/M))M^{2k+1}\varepsilon\) if \(y=-M^{k+1}\).
For even \(0\leq k\leq n-1\) we have
\begin{align*}
\int_{-M^{k+1}}^{-M^k}
g
&=
(M^{k+1}-M^k)
(M^k-M^{k-1})\varepsilon
+
(
M^{2k}
-
M^{2(k+1)}
)
\varepsilon/2
\\
&=
-(1/2+\ord(1/M))M^{2(k+1)}\varepsilon
.
\end{align*}
Therefore
\begin{equation}
\label{eq_Mntozero}
\int_{-M^n}^0
g
=
\sum_{k=0}^{n-1}
\int_{-M^{k+1}}^{-M^k}
g
+
\int_{-1}^0
g
=
-(1/2+\ord(1/M))M^{2n}\varepsilon
\end{equation}
and the result follows from adding \cref{eq_ytoMk} with \(k=n,y=x\) to \cref{eq_Mntozero}.
\end{proof}

Note, that \cref{eq_diffboundassumption} holds for \(f\) with \(K=M^{N+1}/\varepsilon\).
That means a map \(a:(0,\infty)\rightarrow(-\infty,0)\) exists such that for all \(x>0\) we have
\[
\M f(x)
=
\f1{x-a(x)}
\int_{a(x)}^x f
\]
and which satisfies \cref{eq_a} and is Lipschitz by \cref{lemma_lipschitz}.
Moreover by \cref{lem_mandadifferentiable} for all \(x>0\) we have \cref{eq_diffmf}, and if \(-1<a(x)<0\) or \(-M^{n+1}<a(x)<-M^n\) then also \cref{eq_da,eq_ddmf} hold.

\begin{lemma}
\label{lem_exax}
For any odd \(1\leq n\leq N\) and \(x>0\) with \(-M^{n+1}\leq a(x)\leq-M^n\) we have
\[
x
=
(1+\ord(\sqrt\varepsilon+|a(x)|/M^{n+1}))M^n\sqrt\varepsilon
.
\]
\end{lemma}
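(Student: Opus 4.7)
The plan is to combine the first-order condition $\M f(x) = f(a(x))$ (which holds because $\partial_1 A_f(a(x),x) = 0$ at the maximiser) with the integral definition $\M f(x)(x - a(x)) = \int_{a(x)}^x f$, giving, with $A := -a(x) \in [M^n, M^{n+1}]$ and using $f(y) = -y$ for $y > 0$,
\[
f(a(x))(x + A) = \int_{a(x)}^0 f - \f{x^2}{2}.
\]
I would then apply Lemmas \ref{lem_fsimg} and \ref{lem_gint}: since $n$ is odd, $g(a(x)) = -M^n\varepsilon$ and hence $f(a(x)) = -(1+\ord(1/M))M^n\varepsilon$, while the pointwise estimate of Lemma \ref{lem_fsimg} together with nonpositivity of $g$ on $[a(x),0]$ lets me integrate to get $\int_{a(x)}^0 f = (1+\ord(1/M))M^n\varepsilon(M^n/2 - A)$ via Lemma \ref{lem_gint}.

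Expanding both sides, the $M^n\varepsilon A$ contributions cancel \emph{exactly}, leaving a perturbed quadratic
\[
x^2 - 2M^n\varepsilon\, x - M^{2n}\varepsilon = \ord\!\Bigl(\f{M^n\varepsilon\,A}{M}\Bigr),
\]
where I used $x + A \le 2A$, $|M^n/2 - A| \le A$, and $A \ge M^n$ to simplify. Setting $u = x/(M^n\sqrt\varepsilon)$ and dividing by $M^{2n}\varepsilon$ converts this into
\[
(u - \sqrt\varepsilon)^2 = 1 + \varepsilon + \ord(A/M^{n+1}).
\]
Taking the positive root yields $u = \sqrt\varepsilon + \sqrt{1 + \varepsilon + \ord(A/M^{n+1})}$, and applying $\sqrt{1+t} = 1 + \ord(t)$ to the square root (valid because its argument stays bounded away from $0$ for $r$ small) gives $u = 1 + \ord(\sqrt\varepsilon + A/M^{n+1})$, which is the desired identity.

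The main obstacle is tracking the $\ord$ errors with enough precision so that the leading $M^n\varepsilon A$ terms cancel identically; this is what allows the residual error to gain the factor $1/M$ and ultimately produces $A/M^{n+1}$ (rather than the too-large $A/M^n$) in the final formula. A subsidiary step is justifying that one takes the relevant positive root and that the square-root linearisation is valid uniformly in $A \in [M^n, M^{n+1}]$, which follows from the a priori bound $x \le A$ combined with smallness of $r$.
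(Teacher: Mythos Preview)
Your proposal is correct and matches the paper's proof: both combine $\M f(x)=f(a(x))$ with the integral formula, invoke \cref{lem_fsimg,lem_gint} to replace $f$ by $g$, cancel the leading $M^n\varepsilon\,a(x)$ contributions, and solve the resulting perturbed quadratic for $x$ by completing the square and taking the positive root. The only cosmetic difference is that the paper keeps the $(1+\ord(1/M))$ factor on the linear term $M^n\varepsilon x$ rather than invoking your a~priori bound $x\le -a(x)$; that bound is easy to supply (for instance from $\M f(x)\le -x^2/[2(x-a(x))]$ combined with $\M f(x)=f(a(x))\ge -2M^n\varepsilon$), so this is not a gap.
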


\begin{proof}
Abbreviate \(a=a(x)\).
Then by \cref{lem_fsimg} we have
\[
(1+\ord(1/M))
g(a)
=
f(a)
=
\M f(x)
=
\f1{x-a}
\int_a^xf
=
\f{1+\ord(1/M)}{x-a}
\int_a^xg
.
\]
From the previous equality, the definition of \(g\) and \cref{lem_gint} we obtain
\begin{align*}
-(1+\ord(1/M))M^n\varepsilon
&=
-\f{
(1+\ord(1/M))[
M^n\varepsilon(-a-M^n/2)
+
x^2/2
]
}{
x
-
a
}
\\
x^2/2
-
(1+\ord(1/M))M^n\varepsilon x
&=
(1/2+\ord(1/M))M^{2n}\varepsilon
+
\ord(M^{n-1}\varepsilon a)
\\
(x-(1+\ord(1/M))M^n\varepsilon)^2
&=
(1+\ord(\varepsilon))(1+\ord(a/M^{n+1}))M^{2n}\varepsilon
\end{align*}
and thus by \cref{eq_ordpower} we can conclude
\begin{align*}
x
&=
(1+\ord(1/M))M^n\varepsilon
\pm
(1+\ord(\varepsilon))(1+\ord(a/M^{n+1}))M^n\sqrt\varepsilon
\\
&=
(1+\ord(\sqrt\varepsilon+|a|/M^{n+1}))M^n\sqrt\varepsilon
.
\end{align*}
\end{proof}

\begin{lemma}
\label{lem_examfp}
For any odd \(1\leq n\leq N\) and \(x>0\) with \(-M^{n+1}\leq a(x)\leq-M^n\) we have
\[
	(\M f)'(x)
=
(1+\ord(\sqrt\varepsilon+|a(x)|/M^{n+1}))
\f{
M^n\sqrt\varepsilon
}{
a(x)
}
.
\]
\end{lemma}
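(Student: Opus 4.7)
The plan is to apply the explicit formula \cref{eq_diffmf}, namely $(\M f)'(x) = (f(x) - \M f(x))/(x - a(x))$, and then substitute the asymptotic expressions already obtained in \cref{lem_fsimg,lem_exax}. First I would compute each piece in the formula. Since $x > 0$ we have $f(x) = -x$ directly from the definition of $f$. From \cref{eq_a} and the fact that $f(a(x)) = \M f(x)$, and using that $-M^{n+1} \leq a(x) \leq -M^n$ with $n$ odd gives $g(a(x)) = -M^n\varepsilon$, \cref{lem_fsimg} yields
\[
\M f(x) = f(a(x)) = (1+\ord(1/M))g(a(x)) = -(1+\ord(1/M))M^n\varepsilon.
\]
\Cref{lem_exax} gives $x = (1+\ord(\sqrt\varepsilon + |a(x)|/M^{n+1}))M^n\sqrt\varepsilon$.

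The second step is to handle the numerator $f(x) - \M f(x) = -x + (1+\ord(1/M))M^n\varepsilon$. Substituting the expression for $x$ and factoring out $M^n\sqrt\varepsilon$ yields
\[
f(x) - \M f(x) = -M^n\sqrt\varepsilon\bigl[1 + \ord(\sqrt\varepsilon+|a(x)|/M^{n+1}) - \sqrt\varepsilon(1+\ord(1/M))\bigr],
\]
and since $\sqrt\varepsilon$ and $\sqrt\varepsilon/M$ are both absorbed into $\ord(\sqrt\varepsilon+|a(x)|/M^{n+1})$, the numerator equals $-M^n\sqrt\varepsilon(1+\ord(\sqrt\varepsilon+|a(x)|/M^{n+1}))$.

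For the denominator, I would write $x - a(x) = -a(x)(1 + x/|a(x)|)$ and bound $x/|a(x)|$. Since $|a(x)| \geq M^n$, the ratio $M^n/|a(x)| \leq 1$, so the expression for $x$ gives $x/|a(x)| = (1+\ord(\sqrt\varepsilon+|a(x)|/M^{n+1}))\sqrt\varepsilon \cdot M^n/|a(x)|$. Using $\sqrt\varepsilon \cdot \ord(\sqrt\varepsilon+|a|/M^{n+1}) \subset \ord(\sqrt\varepsilon+|a|/M^{n+1})$, this is $\ord(\sqrt\varepsilon + |a(x)|/M^{n+1})$. Hence
\[
x - a(x) = -a(x)\bigl(1 + \ord(\sqrt\varepsilon+|a(x)|/M^{n+1})\bigr).
\]

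Finally, dividing numerator by denominator and using \cref{eq_ordpower} (with $t=-1$) to invert the $1+\ord(\cdot)$ factor in the denominator, and then multiplying the resulting $1+\ord(\cdot)$ terms, gives
\[
(\M f)'(x) = \frac{-M^n\sqrt\varepsilon}{-a(x)}\bigl(1+\ord(\sqrt\varepsilon+|a(x)|/M^{n+1})\bigr) = \frac{M^n\sqrt\varepsilon}{a(x)}\bigl(1+\ord(\sqrt\varepsilon+|a(x)|/M^{n+1})\bigr),
\]
which is the claim. The main obstacle I anticipate is bookkeeping: checking carefully that every error term absorbed into $\ord(\sqrt\varepsilon+|a(x)|/M^{n+1})$ is genuinely dominated by $\sqrt\varepsilon + |a(x)|/M^{n+1}$ on $P_r$ for small $r$; in particular the $\sqrt\varepsilon$-term that survives after substituting the expansion of $x$ must not be accidentally promoted to the leading order, and the arithmetic of $\ord$-expressions under the displayed quotient must respect the linearity rules and \cref{eq_ordpower} declared by the author.
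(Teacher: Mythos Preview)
Your proof is correct and follows essentially the same approach as the paper: both insert \(f(x)=-x\) and \(\M f(x)=f(a(x))\) into \cref{eq_diffmf}, observe that \(f(a(x))\) is negligible against \(-x\) in the numerator and \(x\) is negligible against \(-a(x)\) in the denominator, and then substitute the asymptotic for \(x\) from \cref{lem_exax}. One small remark: you invoke \cref{eq_ordpower} with \(t=-1\), but the paper states it only for \(t>0\); the inversion \(1/(1+\ord(\varphi))=1+\ord(\varphi)\) is of course elementary and needs no appeal to that rule.
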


\begin{proof}
Insert 
\(\M f(x)=f(a(x))\) and \(f(x)=-x\) into
\cref{eq_diffmf}.
Observe that by \cref{lem_fsimg,lem_exax} and the definition of \(g\) the term \(f(a(x))\) in the nominator is negligible against \(-x\), and that \(x\) in the denominator is negligible against \(-a(x)\).
That means
\(
(\M f)'(x)
=
(1+\ord(\ldots))x/a(x)
\)
where \(\ord(\ldots)\) is dominated by the \(\ord(\ldots)\) terms in the formula for \(x\) in \cref{lem_exax}.
\end{proof}

\begin{corollary}
\label{corollary_dmfvalues}
For any \(0\leq n\leq N\) and \(x>0\) with \(a(x)=-M^n\) we have
\[
	(\M f)'(x)
=
-
\sqrt\varepsilon
\begin{cases}
1+\ord(\sqrt\varepsilon+1/M)
&
n\tx{ odd}
,\\
\ord(1/M)
&
n\tx{ even}
.
\end{cases}
\]
\end{corollary}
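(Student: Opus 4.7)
The strategy is to derive \Cref{corollary_dmfvalues} from \Cref{lem_examfp} by specializing to the boundary value \(a(x)=-M^n\) and matching it to an odd index for which \Cref{lem_examfp} applies. The parity of \(n\) determines whether \(-M^n\) sits at the upper or at the lower endpoint of the relevant odd interval, and this is exactly what drives the two cases of the conclusion: at the upper endpoint the error factor \(|a(x)|/M^{n+1}\) is small (of size \(1/M\)), while at the lower endpoint it is of unit size.

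For \(n\) odd with \(1\leq n\leq N\) the inequalities \(-M^{n+1}\leq a(x)=-M^n\leq -M^n\) hold, so I would apply \Cref{lem_examfp} at this same \(n\). The factor \(|a(x)|/M^{n+1}=1/M\) together with the prefactor \(M^n\sqrt\varepsilon/a(x)=-\sqrt\varepsilon\) yield \((\M f)'(x)=-\sqrt\varepsilon(1+\ord(\sqrt\varepsilon+1/M))\), as claimed.

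For \(n\) even with \(2\leq n\leq N-1\) the same point \(a(x)=-M^n=-M^{(n-1)+1}\) is the lower endpoint of the interval associated with the odd index \(n-1\), so \Cref{lem_examfp} applies with \(n\) replaced by \(n-1\). Now \(|a(x)|/M^{(n-1)+1}=1=\ord(1)\) and the prefactor is \(M^{n-1}\sqrt\varepsilon/a(x)=-\sqrt\varepsilon/M\); multiplying gives \((\M f)'(x)=-\sqrt\varepsilon\cdot\ord(1/M)\). The remaining case \(n=0\) is not covered by any odd index: here I would proceed directly, combining \(\M f(x)=f(-1)=-\varepsilon/M\) with \(\int_{-1}^xf=-\varepsilon/(2M)-x^2/2\) to solve for \(x\) via the resulting quadratic, and then reading off the bound from \cref{eq_diffmf}. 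The only thing to get right across the two cases is the matching of \(-M^n\) with a suitable odd interval — beyond that, the proof is essentially a specialization of \Cref{lem_examfp} and I expect no genuinely hard step.
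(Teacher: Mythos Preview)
Your derivation for \(n\geq1\) is exactly what the paper intends: the corollary is stated without proof as an immediate consequence of \cref{lem_examfp}, and your case split---odd \(n\) at the upper endpoint (error \(|a(x)|/M^{n+1}=1/M\), prefactor \(-\sqrt\varepsilon\)) versus even \(n\geq2\) at the lower endpoint of the interval for the odd index \(n-1\) (error of unit size, prefactor \(-\sqrt\varepsilon/M\))---is the correct specialization.

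The one place to be careful is \(n=0\). Your proposed direct computation is the right move, but carry it out: the quadratic gives
\[
x=\f\varepsilon M+\sqrt{\f\varepsilon M}\sqrt{1+\f\varepsilon M}=(1+\ord(\sqrt{\varepsilon/M}))\sqrt{\varepsilon/M},
\]
and then \cref{eq_diffmf} yields
\[
(\M f)'(x)=\f{-x+\varepsilon/M}{x+1}=-(1+\ord(\sqrt{\varepsilon/M}))\sqrt{\varepsilon/M}=-\sqrt\varepsilon\cdot\ord(1/\sqrt M),
\]
which is strictly weaker than the claimed \(-\sqrt\varepsilon\cdot\ord(1/M)\). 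So the statement as written is a slight overreach at \(n=0\); your computation will not verify it literally. This does not matter for the paper's application: in the proof of \cref{theo_counterexample} the corollary is used only to lower-bound \(\var^q((\M f)')\), and for that the even-\(n\) values merely need to be \(o(\sqrt\varepsilon)\) so that consecutive differences with odd-\(n\) values are \((1+\ord(\sqrt\varepsilon+1/\sqrt M))\sqrt\varepsilon\). Just be aware that ``reading off the bound'' at \(n=0\) will hand you \(1/\sqrt M\), not \(1/M\).
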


\begin{lemma}
\label{cor_mfppsuperlevelset}
For any odd \(1\leq n\leq N-1\) we have
\[
\lm{\{x>0:-2M^n< a(x)<-M^n\}}
\geq
(1+\ord(\sqrt\varepsilon+1/M))
\f{\sqrt\varepsilon}M
\]
and for every \(x>0\) with \(-2M^n< a(x)<-M^n\) we have
\[
	-(\M f)''(x)
\geq
(1/8+\ord(\sqrt\varepsilon+1/M))
M
.
\]
\end{lemma}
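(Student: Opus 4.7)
My plan is to derive precise asymptotics for $a'(x)$ and $(\M f)''(x)$ in the region $a(x)\in(-2M^n,-M^n)$ by combining the estimate from \cref{lem_examfp} with \cref{eq_da,eq_ddmf}, then use a change of variables from $x$ to $a$ for the measure bound.

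First, I observe that for $a(x)\in(-2M^n,-M^n)$ the ratio $|a(x)|/M^{n+1}\leq 2/M$, so the error term in \cref{lem_examfp} simplifies to $\ord(\sqrt\varepsilon+1/M)$, giving $(\M f)'(x)=(1+\ord(\sqrt\varepsilon+1/M))M^n\sqrt\varepsilon/a(x)$. Since $n$ is odd and $a(x)\in(-M^{n+1},-M^n)$, the definition of $f$ yields $f'(a(x))=\varepsilon/M^{n+1}$, and \cref{eq_da} then gives
\[
a'(x) = \f{(\M f)'(x)}{f'(a(x))} = (1+\ord(\sqrt\varepsilon+1/M))\f{M^{2n+1}}{a(x)\sqrt\varepsilon}.
\]

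For the measure estimate, let $x_0<x_1$ denote the endpoints satisfying $a(x_0)=-M^n$ and $a(x_1)=-2M^n$ (they exist and are unique by monotonicity of $a$ from \cref{lemma_lipschitz}). Changing variables via $\intd x = \intd a/a'(x)$,
\[
x_1-x_0 = (1+\ord(\sqrt\varepsilon+1/M))\f{\sqrt\varepsilon}{M^{2n+1}}\int_{-M^n}^{-2M^n} a\intd a = \f32(1+\ord(\sqrt\varepsilon+1/M))\f{\sqrt\varepsilon}M,
\]
which is at least the claimed $(1+\ord(\sqrt\varepsilon+1/M))\sqrt\varepsilon/M$.

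For the $(\M f)''$ bound, I substitute the asymptotics into \cref{eq_ddmf}. The dominant numerator term is
\[
-a'(x)(\M f)'(x) = (1+\ord(\sqrt\varepsilon+1/M))\f{M^{3n+1}}{a(x)^2},
\]
which is at least $M^{n+1}/4$ for $|a(x)|\leq 2M^n$ and therefore swamps both $f'(x)=-1$ and $2(\M f)'(x)=\ord(\sqrt\varepsilon)$ within the $\ord(\sqrt\varepsilon+1/M)$ relative error. Combined with $x-a(x)=|a(x)|(1+\ord(\sqrt\varepsilon))$ (valid since $x=\ord(M^n\sqrt\varepsilon)\ll|a(x)|$ by \cref{lem_exax}), I obtain
\[
-(\M f)''(x) = (1+\ord(\sqrt\varepsilon+1/M))\f{M^{3n+1}}{|a(x)|^3} \geq (1/8+\ord(\sqrt\varepsilon+1/M))M,
\]
where the final inequality uses $|a(x)|\leq 2M^n$.

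The main obstacle is consistent error bookkeeping: the $\ord(\sqrt\varepsilon+1/M)$ relative errors in $(\M f)'$ and $a'$ must combine multiplicatively inside \cref{eq_ddmf} without degrading, the subdominant numerator contributions from $f'(x)$ and $2(\M f)'(x)$ must be absorbed into the giant $-a'(x)(\M f)'(x)\sim M^{n+1}$ term within the same relative error, and the denominator expansion $x-a(x)=|a(x)|(1+\ord(\sqrt\varepsilon))$ must be shown not to inflate these errors. Extracting the sharp $3/2$ constant in the measure estimate and the $1/8$ constant in the curvature estimate relies on the explicit integrals of $a$ and $1/a^3$ over $(-2M^n,-M^n)$, which after rescaling are independent of $n$.
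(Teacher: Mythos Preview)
Your proof is correct and follows essentially the same route as the paper: both derive $a'(x)$ from \cref{lem_examfp} and \cref{eq_da}, then plug into \cref{eq_ddmf} together with the denominator estimate from \cref{lem_exax} to reach $-(\M f)''(x)=(1+\ord(\sqrt\varepsilon+1/M))M^{3n+1}/|a(x)|^3$. The only difference is in the measure bound, where you evaluate the change-of-variables integral exactly to obtain the constant $3/2$, while the paper simply bounds $\int_{-2M^n}^{-M^n}(-a^{-1})'$ below by $M^n\cdot\inf(1/(-a'))$; both yield the required $(1+\ord(\sqrt\varepsilon+1/M))\sqrt\varepsilon/M$.
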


\begin{proof}
By \cref{lem_examfp} for every \(x>0\) with \(-2M^n\leq a(x)\leq -M^n\) we have
\[
	(\M f)'(x)
=
(1+\ord(\sqrt\varepsilon+1/M))
\f{
M^n \sqrt\varepsilon
}{
a(x)
}
,
\]
and so by \cref{eq_da} we obtain
\[
-a'(x)
=
	\f{-(\M f)'(x)}{f'(a(x))}
=
(1+\ord(\sqrt\varepsilon+1/M))
\f{
M^{2n+1}
}{
-a(x)\sqrt\varepsilon
}
.
\]
We can conclude the first part,
\begin{align*}
&
\lm{\{x>0:-2M^n<a(x)<-M^n\}}
\\
&\qquad\qquad=
\int_{-2M^n}^{-M^n}
-
(a^{-1})'
\geq
M^n
\inf_{-2M^n<a(x)<-M^n}
\f1{-a'(x)}
\\
&\qquad\qquad\geq
(1+\ord(\sqrt\varepsilon+1/M))
\f{\sqrt\varepsilon}M
.
\end{align*}
By \cref{lem_exax} we have
\[
x
=
(1+\ord(\sqrt\varepsilon+1/M))M^n\sqrt\varepsilon 
\]
and thus by \cref{eq_ddmf} we also obtain the second part,
\begin{align*}
	-(\M f)''(x)
&=
\f
{
	(2-a'(x))(\M f)'(x)
-
f'(x)
}{x-a(x)}
\\
&=
\f{
(1+\ord(\sqrt\varepsilon+1/M))M^{3n+1}/a(x)^2
+1
}{
-(1+\ord(\sqrt\varepsilon))a(x)
}
\\
&=
(1+\ord(\sqrt\varepsilon+1/M))
\f{
M^{3n+1}
}{
-a(x)^3
}
\\
&\geq
(1+\ord(\sqrt\varepsilon+1/M))
\f{
M
}{
8
}
.
\end{align*}
\end{proof}

\begin{proof}[Proof of \cref{theo_counterexample}]
By definition of \(f\) for \(1\leq q<\infty\) we have
\begin{align}
\nonumber
\var^q(f')
&=
\Bigl[
(1+\varepsilon/M)^q
+
\bigl(
(
1+\ord(1/M)
)
\varepsilon
\bigr)^q
(1+\ord(1/N))N
\Bigr]^{1/q}
\\
\label{eq_varqf}
&=
\Bigl[
1
+
\ord\bigl(
1/M
+
1/(N^{1/q}\varepsilon)
\bigr)
\Bigr]
N^{1/q}
\varepsilon
\end{align}
and by \cref{corollary_dmfvalues,eq_ordpower} we have
\begin{align*}
\var^q((\M f)')
&\geq
\bigl(
(1+\ord(\sqrt\varepsilon+1/M))
(1+\ord(1/N))
N
\varepsilon^{q/2}
\bigr)^{1/q}
\\
&=
(1+\ord(\sqrt\varepsilon+1/M+1/N))
N^{1/q}
\sqrt\varepsilon
.
\end{align*}
We can conclude
\[
\f{
	\var^q((\M f)')
}{
\var^q(f')
}
\geq
\f{
1+\ord(\sqrt\varepsilon+1/M+1/(N^{1/q}\varepsilon))
}{
\sqrt\varepsilon
}
,
\]
which tends to \(\infty\) for \(M\) and \(N^{1/q}\varepsilon\) large enough and \(\varepsilon\rightarrow0\).

By \cref{cor_mfppsuperlevelset} for \(M\) large and \(\varepsilon\) small enough we have
\begin{align*}
&
\lm{\{x>0:-(\M f)''(x)\geq M/9\}}
\\
&\qquad\qquad\geq
\bigcup_{k=1}^{\lfloor N/2\rfloor}
\lm{\{x>0:-2M^{2k-1}<a(x)<-M^{2k-1}\}}
\\
&\qquad\qquad=
(1+\ord(\sqrt\varepsilon+1/M)+\ord(1/N))
\f{
N\sqrt\varepsilon
}{
2M
}
\end{align*}
and with \cref{eq_varqf} we can conclude
\[
	\f M9
	\f{
		\lm{\{x>0:-(\M f)''(x)\geq M/9\}}
	}{
		\var(f')
	}
	\geq
	\f{
	1+\ord(\sqrt\varepsilon+1/M+1/(N\varepsilon))
	}{
	18\sqrt\varepsilon
	}
	,
\]
which tends to \(\infty\) for \(M\) and \(N\varepsilon\) large enough and \(\varepsilon\rightarrow0\).
\end{proof}

\printbibliography

\end{document}